\documentclass[reqno,a4paper]{amsart}

\usepackage[
  style=numeric-comp,
  sorting=nty,
  sortcites=true,
  doi=false,
  url=false,
  giveninits=true
]{biblatex}
\renewbibmacro{in:}{}
\usepackage{amsmath}
\usepackage{amsthm}
\usepackage{amssymb}

\numberwithin{equation}{section}

\theoremstyle{plain}
\newtheorem{theorem}{Theorem}[section]
\newtheorem{lemma}[theorem]{Lemma}
\newtheorem{proposition}[theorem]{Proposition}
\newtheorem{conjecture}[theorem]{Conjecture}
\theoremstyle{remark}
\newtheorem{remark}[theorem]{Remark}

\newcommand{\Cstar}{\mathbb{C}^{\ast}}
\newcommand{\qPII}{\ensuremath{q\text{-}\mathrm{P}_{\mathrm{II}}}}

\title[Quicksilver Asymptotics and Pole Arrays for \(q\)-Painlevé II] {Quicksilver Asymptotics and Pole Arrays for \(q\)-Painlevé II}
\author{Joshua Holroyd}
\date{\today}

\begin{document}

\begin{abstract}
 We construct local families of solutions of the $q$-difference second Painlev\'e equation that vanish at infinity with a common algebraic expansion, parameterised by the amplitude of an exponentially small correction. Considering an appropriate boundary region, a formal expansion that retains two independently excited exponential perturbations is developed, and subsequent transseries summation predicts the location of simple poles. Adopting a $q$-theta function realisation of the exponential factors, we prove an asymptotic pole-location formula for a one-parameter family of true solutions. These poles approach the aforementioned boundaries where exponential behaviour fundamentally changes, providing an analogue of pole arrays near anti-Stokes directions in classical Painlev\'e asymptotics.
\end{abstract}

\maketitle

\section{Introduction}

A well-studied problem in classical Painlev\'e asymptotics is to relate a solution's expansion in a pole-free sector near infinity to its behaviour near the sector's boundary. Boutroux's analysis of the first and second Painlev\'e equations identified solutions, called \emph{int\'egrales tronqu\'ees}, which admit algebraic asymptotic expansions and are pole-free near infinity in appropriate sectors \cite{Boutroux1913,Boutroux1914}. Such an expansion in a given sector may belong to a family of solutions, as generically a free parameter multiplies an exponentially small correction, invisible at every algebraic order. Analysis of such an exponential perturbation near the boundary of its decay region is essential to understanding where poles appear \cite{CostinCostin2001}.

Joshi's \emph{quicksilver} solutions \cite{Joshi2015} of the $q$-difference first Painlev\'e equation are analogous to Boutroux's tronqu\'ee solutions. These solutions vanish at infinity according to a divergent formal power series; \cite{Joshi2015} proves the existence of true analytic solutions with that expansion on suitable $q$-domains. Since a $q$-difference equation relates iterates along a $q$-spiral, analogous pole-free domains are no longer ordinary sectors of the complex plane.

In this article we study the corresponding vanishing asymptotic behaviour for the $q$-difference second Painlev\'e equation
\begin{equation}
 f(qt)f(t/q) =\frac{a(1+t f(t))} {f(t)\bigl(f(t)+t\bigr)}, \qquad a,q\in\Cstar, \label{eq:qPII}
\end{equation}
where $\Cstar=\mathbb C\setminus\{0\}$. We write $t_n=t_0q^n$ and $f_n=f(t_n)$, with $t_0\in\Cstar$ and $n\in\mathbb Z$.  Throughout, $|q|>1$ and we consider $n\to+\infty$; the case $|q|<1$ is obtained by reversing the iteration. We denote $p=q^2$ throughout this paper. The balance $f_n\sim-1/t_n$ determines the formal algebraic expansion considered in this paper. We call true solutions asymptotic to this expansion quicksilver solutions, as in \cite{Joshi2015}.

In the differential setting, an exponential correction will change from decay to oscillation or growth as the direction of approach to infinity varies. We use \emph{anti-Stokes} for directions along which the exponential contribution is purely oscillatory \cite{CostinCostin2001}. In such directions, contributions that are negligible in the decay region become significant. Here, one may develop asymptotic expansions featuring two independent exponential amplitudes, as the two modes no longer exhibit exponential growth or decay. Such two-parameter families are well established in the differential setting; see, for instance, \cite{SchiappaVaz2014,VanSpaendonckVonk2022}.

For \qPII{}, linearisation about the vanishing balance gives two independent perturbations, one growing and one decaying along a forward $q$-orbit. To investigate behaviour in which this separation by exponential size is lost, we retain both modes in a formal expansion. We call this expansion \emph{bimodal}; when one of the free exponential amplitudes is zero, we call it \emph{one-sided}. The nonlinear interaction of these independently excited modes modifies both the algebraic background and the exponential phase seen in the one-sided case.

Costin and Costin \cite{CostinCostin2001} develop expansions that remain valid as an initially small exponential correction becomes finite near anti-Stokes boundaries. They group powers of that exponential into coefficient functions and relate the singularities of these functions to those of the solution. For the first and second Painlev\'e equations, this gives pole arrays near anti-Stokes directions, with locations determined by the free parameter in the exponentially small correction. This reorganisation is called \emph{transasymptotic} analysis; explicit singularity-location calculations for the first Painlev\'e equation (and its perturbed form) are given in \cite{OldeDaalhuis2022}.

For $q$-PI, \cite{JoshiLustriLuu2019} considers the switching of exponentially small corrections across Stokes curves in a continuum-limit scaling with $q\to1$ and $n\to\infty$. For fixed $q$, \cite{JoshiOldeDaalhuis2025} develops exponentially improved asymptotics and Stokes analysis for $q$-PI. Their one-sided transseries summation provides a rational expression and predicts poles from its denominator. In this article, we extend these ideas by first considering a bimodal transseries in the \qPII{} case and, under explicit local hypotheses, prove the existence and locations of simple poles for the one-parameter family of true solutions.

For background on meromorphic $q$-difference asymptotics and the role of $r$-theta functions, including a discussion of nonlinear $r$-theta-transseries, see Zhang \cite{Zhang2006}. In Section \ref{sec:meromorphic-realisation} we implement a fixed $r$-theta realisation of the exponential factors, leading to an explicit geometric analogue of classical anti-Stokes boundaries, which, as in the differential setting, are approached by the poles of true meromorphic solutions.

\subsection{Main results}

Theorem~\ref{thm:one-sided-existence} constructs a local one-parameter family of quicksilver solutions sharing the algebraic expansion with leading term $-1/t$ on neighbourhoods of forward $q$-orbits.

For the formal bimodal expansion, Proposition~\ref{prop:phase-selection} determines the amplitude-dependent exponential phase and leading (geometric) coefficients. Under the proposed correction formulas of Conjecture~\ref{conj:polynomial-corrections}, rational summation indicates the positions of simple poles (Proposition~\ref{prop:formal-pole-displacement}).

For a fixed $r$-theta function realisation, Proposition~\ref{prop:one-sided-poles} establishes a local array of simple poles for a normalised one-sided solution and gives its first displacement correction. These poles approach convergence boundaries of the corresponding geometric sums of Section~\ref {sec:formal-bimodal}.

\subsection{Outline of the paper}

Section~\ref{sec:quicksilver} establishes the quicksilver analytic existence result by summation and contraction. In Section~\ref{sec:formal-bimodal} we develop the bimodal expansion and its formal pole predictions (with further coefficient formulas supplied in Appendix~\ref{app:Coefficients}). Section~\ref{sec:meromorphic-realisation} proves pole estimates for the one-sided solutions and discusses the corresponding anti-Stokes geometry (with the core uniform estimate proof given in Appendix~\ref{app:one-sided-estimates}).

\section{Existence of quicksilver solutions}
\label{sec:quicksilver}

Following the Borel--Ritt and summation approach of \cite{Joshi2015} regarding $q$-PI, we construct a local one-parameter family of solutions of \eqref{eq:qPII} sharing a vanishing algebraic expansion near infinity. After realising the formal series background, we construct exponential modes that satisfy a linear perturbation equation and a corresponding summation operator, and then solve the nonlinear perturbation equation by contraction. Estimates are uniform on the local $q$-domains defined below.

\subsection{Algebraic background and local domains}

Substitution into \eqref{eq:qPII} determines the unique formal expansion with leading term $-1/t$ as $|t|\to\infty$:
\begin{equation}
 \hat D(t):=\sum_{j\geq0}\hat d_j/t^{2j+1} =-\frac1t-\frac{1}{at^3}+\mathcal O\left(1/t^{5}\right). \label{eq:algebraic-series}
\end{equation}
The coefficient recurrence and its growth estimates are given in \cite[\S3.2.1 and Appendix A]{Holroyd2025}; only the existence of this formal series is needed here.

Choose an open base neighbourhood $U\Subset\Cstar$ sufficiently small so that the sets (\emph{dilates}) $q^nU$, $n\in\mathbb Z$, are pairwise disjoint, and for $N\in\mathbb{Z}$ define
\[
 \Omega_N:=\bigcup_{n\ge N}q^nU,\qquad (t_n=q^nt_0,\quad t_0\in U),
\]
noting then that each point of $\Omega_N$ corresponds to a unique base point of $U$.  A finite collection of sectors can be chosen so that each $q^nU$ lies in a closed subsector of one of them; the Borel--Ritt theorem \cite{Friedrichs1953,HsiehSibuya1999} supplies a realisation of \eqref{eq:algebraic-series} on each sector. Selecting one such realisation on each disjoint dilate defines a holomorphic function $D$ on a sufficiently late $\Omega_N$, with $D\sim\hat D$ uniformly there. We replace $U$ with a fixed, relatively compact subdomain (still denoted $U$); all estimates below are uniform for $t_0\in\overline U$.

Denote $D_n=D(t_n)$ and increase $N$ so that $D_n(D_n+t_n)\ne0$. As the series \eqref{eq:algebraic-series} solves \eqref{eq:qPII} formally, the residual defined
\begin{equation}
 \mathcal E_n:=D_{n+1}D_{n-1} -\frac{a(1+t_nD_n)}{D_n(D_n+t_n)} \label{eq:algebraic-residual}
\end{equation}
is algebraically flat: for every $M\ge0$, $\mathcal E_n=\mathcal O(1/t_n^{M})$ uniformly in the base point. All shifted equations below are imposed at $n\ge N+1$, where the three adjacent values belong to $\Omega_N$.

\subsection{Perturbation equation and estimates}

To obtain an exact solution near the background $D$, set $f_n=D_n+v_n$ and write the forward mapping as
\[
 F_n(x,y)=\frac{a(1+t_nx)}{yx(x+t_n)}.
\]
We choose the coefficients of the linear operator as
\[
 \beta_n= \frac{D_{n-1}D_{n+1}(2D_n+t_n)-at_n} {D_{n-1}D_n(D_n+t_n)},\qquad \gamma_n=\frac{D_{n+1}}{D_{n-1}},
\]
so that \eqref{eq:qPII} is equivalent to
\begin{equation}
 (\mathcal Av)_n:=v_{n+1}+\beta_nv_n+\gamma_nv_{n-1} =\mathcal N_n(v_n,v_{n-1}), \label{eq:nonlinear-perturbation}
\end{equation}
for exact remainder $\mathcal N_n$ that is by definition
\[
 \mathcal N_n(x,y)=F_n(D_n+x,D_{n-1}+y)-D_{n+1}+\beta_nx+\gamma_ny.
\]
The following partial derivatives of $F_n$ are evaluated at $(D_n,D_{n-1})$.  Direct differentiation gives $\mathcal N_n(0,0)=-\mathcal E_n/D_{n-1}$ and
\begin{align*}
 \beta_n+\partial_xF_n=\frac{(2D_n+t_n)\mathcal E_n}{D_{n-1}D_n(D_n+t_n)},\quad\gamma_n+\partial_yF_n=\mathcal E_n/D_{n-1}^2.
\end{align*}
So, the constant and linear terms of $\mathcal N_n$ are flat, and with $\Delta X_n:=X_n-X_{n-1}$, the choice of $\gamma_n$ also gives
\begin{equation}
 \frac{P_n(\mathcal Av)_n-v_n(\mathcal AP)_n}{D_{n+1}D_n} =\Delta\left\{ \frac{P_nv_{n+1}-P_{n+1}v_n}{D_{n+1}D_n}\right\}, \label{eq:ExactDiff}
\end{equation}
for any sequences $P_n,v_n$.  This follows by cancelling the terms containing $\beta_n$ and using $\gamma_n=D_{n+1}/D_{n-1}$.

\begin{lemma}[Holomorphy and nonlinear estimates]
\label{lem:nonlinear-estimates}
Fix $0<\delta<1$. For sufficiently large $N$ and $n\ge N+1$, $\mathcal N_n(x,y)$ is holomorphic in a neighbourhood of the closed polydisc
\begin{equation}
 |x|\le\delta|D_n|,\qquad |y|\le\delta|D_{n-1}|. \label{eq:shrinking-polydisc}
\end{equation}
There is a constant $C>0$ and, for each $M\ge0$, a number $\eta\ge0$ such that
\begin{align}
 |\mathcal N_n(x,y)| &\le C|t_n|^3(|x|+|y|)^2+\eta/|t_n|^{M}, \label{eq:nonlinear-estimate}\\
 \|\nabla\mathcal N_n(x,y)\| &\le C|t_n|^3(|x|+|y|)+\eta/|t_n|^{M}. \label{eq:nonlinear-lipschitz}
\end{align}
Besides the fixed data, $C$ may depend on $\delta$, and $\eta$ may also depend on $M,N$, with $\eta\to0$ as $N\to\infty$ for each fixed $M$. Both are independent of $n$ and $t_0\in\overline U$.
\end{lemma}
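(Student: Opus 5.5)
The plan is to Taylor-expand $\mathcal N_n$ about $(0,0)$ and to control its second-order part with Cauchy estimates on a slightly larger polydisc. The constant and linear parts are handled separately by the flatness identities above.

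\emph{Holomorphy.} Since $D\sim\hat D$ uniformly, we have $D_n=-1/t_n+\mathcal O(|t_n|^{-3})$, and likewise for $D_{n-1}$. Fix $\delta<\delta'<1$. On the polydisc $|x|\le\delta'|D_n|$, $|y|\le\delta'|D_{n-1}|$ the denominator factors satisfy
\[
|D_n+x|\ge(1-\delta')|D_n|,\qquad |D_{n-1}+y|\ge(1-\delta')|D_{n-1}|,\qquad |D_n+x+t_n|\ge|t_n|/2
\]
once $N$ is large. Hence $F_n(D_n+x,D_{n-1}+y)$ is holomorphic there, and so is $\mathcal N_n$. On this larger polydisc the numerator $a(1+t_n(D_n+x))$ is $\mathcal O(1)$. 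The denominator has modulus at least of order $|t_n|^{-1}\cdot|t_n|^{-1}\cdot|t_n|=|t_n|^{-1}$, because $|D_n|\asymp|D_{n-1}|\asymp|t_n|^{-1}$ (using $|t_{n-1}|=|t_n|/|q|$). Therefore $|F_n|\le C_1|t_n|$. The linear terms $\beta_nx+\gamma_ny$ and the term $D_{n+1}$ are bounded similarly: $\beta_n=\mathcal O(1)$ and $\gamma_n=\mathcal O(1)$ from their explicit formulas. So $\sup|\mathcal N_n|\le C_2|t_n|$ on the larger polydisc, with $C_2$ uniform in $n$ and $t_0\in\overline U$.

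\emph{Splitting and quadratic estimate.} Write
\[
\mathcal N_n(x,y)=\mathcal N_n(0,0)+x\,\partial_x\mathcal N_n(0,0)+y\,\partial_y\mathcal N_n(0,0)+R_n(x,y).
\]
The affine part is flat by the identities recorded before the lemma:
\[
\mathcal N_n(0,0)=-\mathcal E_n/D_{n-1},\qquad \partial_x\mathcal N_n(0,0)=\beta_n+\partial_xF_n,\qquad \partial_y\mathcal N_n(0,0)=\gamma_n+\partial_yF_n.
\]
These are $\mathcal E_n$ multiplied by fixed powers of $t_n$, at most $|t_n|^3$. Given $M$, apply the flatness of $\mathcal E_n$ with $M+4$. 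Since $|x|,|y|=\mathcal O(|t_n|^{-1})$, the affine part is bounded by $\eta_0|t_n|^{-M}$, where $\eta_0=\sup_{n>N}|t_n|^{M}\cdot(\dots)$. This supremum tends to $0$ as $N\to\infty$ because there is an extra decaying power to spare.

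For the remainder $R_n$, rescale by setting $x=D_n\xi$ and $y=D_{n-1}\zeta$, so that $G(\xi,\zeta):=R_n(D_n\xi,D_{n-1}\zeta)$ is holomorphic on $|\xi|,|\zeta|\le\delta'$. It vanishes to second order at the origin and satisfies $|G|\le C_3|t_n|$, the affine part having been bounded above. Schwarz's lemma in two variables, or Cauchy estimates on the Taylor coefficients of order at least $2$, then gives
\[
|G(\xi,\zeta)|\le C_4|t_n|(|\xi|+|\zeta|)^2\qquad\text{for }|\xi|,|\zeta|\le\delta.
\]
Undoing the scaling with $|\xi|\le C|t_n||x|$ yields $|R_n|\le C|t_n|^3(|x|+|y|)^2$, which is \eqref{eq:nonlinear-estimate}. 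Similarly, $\nabla G=\mathcal O(|t_n|(|\xi|+|\zeta|))$, and the chain rule introduces one more factor of $|t_n|$. Together with the $|t_n|$ from the rescaling this gives $|t_n|^3(|x|+|y|)$; the gradient of the affine part is flat. This is \eqref{eq:nonlinear-lipschitz}.

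\emph{Main obstacle.} The main obstacle is ensuring that every constant is uniform in $n$ and in $t_0\in\overline U$. This reduces to having two-sided bounds $|D_n|\asymp|t_n|^{-1}$ and the lower bound on the denominators that are uniform across the dilates. Both follow from the uniform asymptotics $D\sim\hat D$ on $\Omega_N$ after enlarging $N$. Only the flat terms depend on $M$ and $N$, which is why $\eta$ alone carries that dependence.
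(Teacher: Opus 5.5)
Your proof is correct and follows essentially the same route as the paper: bound $|F_n|\le C|t_n|$ on a slightly larger relative polydisc, control the quadratic remainder by Cauchy-type estimates at scale $1/|t_n|$, and use the recorded identities to show the affine part is flat. The only difference is cosmetic: you rescale and apply Schwarz's lemma, whereas the paper bounds $\nabla^2F_n$ and applies Taylor's formula. One small slip: in fact $\beta_n=-(a/q)t_n^2+\mathcal O(1)$, not $\mathcal O(1)$, but $\beta_nx=\mathcal O(|t_n|)$ on the polydisc, so your bound $\sup|\mathcal N_n|\le C_2|t_n|$ still holds; you could also sidestep $\beta_n$, since the second-order remainder of $\mathcal N_n$ coincides with that of $F_n(D_n+x,D_{n-1}+y)$.
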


\begin{proof}
On a slightly larger relative polydisc, still of relative radius less than one, the three denominator factors in $F_n$ satisfy
\[
 |D_n+x|\asymp|D_{n-1}+y|\asymp1/|t_n|,\qquad |D_n+t_n+x|\asymp|t_n|.
\]
They are nonzero for large $N$, and $|F_n|\le C|t_n|$ there. Cauchy's estimate on radii proportional to $1/|t_n|$ gives $\|\nabla^2F_n\|\le C|t_n|^3$ on \eqref{eq:shrinking-polydisc}.  Taylor's formula therefore bounds the quadratic remainder and its gradient by the first terms in \eqref{eq:nonlinear-estimate}--\eqref{eq:nonlinear-lipschitz}. The affine part of $\mathcal N_n$ and its gradient are bounded by $C|t_n||\mathcal E_n|$ and $C|t_n|^2|\mathcal E_n|$, respectively.  For example, after increasing $N$ so that $|t_n|\ge1$, one may take
\[
 \eta=C \sup_{\Lambda} |t_n|^{M+2}|\mathcal E_n|,\qquad \Lambda=\left\{(t_0,n)\in\overline U\times\mathbb{Z}\,:\,n\ge N+1\right\}.
\]
Uniform flatness of \eqref{eq:algebraic-residual} makes this tend to zero.
\end{proof}

\subsection{Linear modes and the summation operator}

We construct a recessive solution $P^-$ and a dominant solution $P^+$ of the linear, homogeneous equation $\mathcal AP=0$, then use them to define a right inverse of $\mathcal A$ by summation.

\begin{lemma}[Linear dichotomy]
\label{lem:linear-dichotomy}
On a sufficiently late orbit tail, $\mathcal AP=0$ has a fundamental pair
\begin{align}
 P_n^-&=a^{-n}t_0^{-2n}q^{-n^2-2n} \{1+\mathcal O(1/t_n^{2})\}, \label{eq:Pminus}\\
 P_n^+&=a^nt_0^{2n}q^{n^2-2n} \{1+\mathcal O(1/t_n^{2})\}. \label{eq:Pplus}
\end{align}
The estimates are uniform for $t_0\in\overline U$, and the solutions are holomorphic in the base point. Both are nonzero on a sufficiently late tail; $P^-$ decays faster than every algebraic power of $1/|t_n|$, whereas $P^+$ grows faster than every algebraic power of $|t_n|$.
\end{lemma}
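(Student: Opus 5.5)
The plan is to work with the ratios $\rho_n:=P_{n+1}/P_n$ rather than with $P_n$ directly. Dividing $\mathcal AP=0$ by $P_n$ turns it into the first-order Riccati-type relation
\[
 \rho_n+\beta_n+\gamma_n/\rho_{n-1}=0 .
\]
I will construct a dominant ratio by forward iteration and a recessive ratio by backward iteration from infinity. Each is obtained as the fixed point of a contraction on a late tail $n\ge N'$, with estimates uniform in $t_0\in\overline U$.

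\emph{Step 1: asymptotics of the coefficients.} Using $D\sim\hat D=-1/t-1/(at^3)+\dots$ uniformly on $\Omega_N$, I expand $\beta_n$ and $\gamma_n$ to relative order $\mathcal O(1/t_n^2)$. The leading term of $\gamma_n$ is
\[
 \gamma_n=\frac{D_{n+1}}{D_{n-1}}=q^{-2}\{1+\mathcal O(1/t_n^2)\}.
\]
In $\beta_n$ the term $-at_n$ dominates the numerator, so $\beta_n=-\kappa\, t_n^2\{1+\mathcal O(1/t_n^2)\}$ for an explicit constant $\kappa$ depending on $a$ and $q$. The next algebraic corrections can only enter at relative order $t_n^{-2}$, because $\hat D$ is odd in $t$. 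The two characteristic ratios are then
\[
 \lambda_n^+\approx-\beta_n\asymp t_n^2,\qquad \lambda_n^-\approx-\gamma_n/\beta_n\asymp t_n^{-2}.
\]
I will fix the constants by matching them with the stated ratios:
\[
 P^+_{n+1}/P^+_n=at_0^2q^{2n-1},\qquad P^-_{n+1}/P^-_n=a^{-1}t_0^{-2}q^{-2n-3}.
\]
This requires carefully tracking the shift factors $t_{n\pm1}=q^{\pm1}t_n$ in $D_{n\pm1}$. I expect this bookkeeping of the exact constants, and in particular the powers of $q$, to be the fiddliest part of the argument. As a cross-check I will use the Wronskian identity that follows from \eqref{eq:ExactDiff} with $v=P^+$ and $P=P^-$: the quantity $P^-_nP^+_{n+1}-P^-_{n+1}P^+_n$ must be a constant multiple of $D_{n+1}D_n\asymp q^{-2n}$. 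This ties the product of the two leading ratios to $\gamma_n$ exactly, and so pins down the $q$-powers.

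\emph{Step 2: construction of the ratios.} For the dominant solution I write $\rho^+_n=-\beta_n(1+u_n)$. The unknown $u$ then satisfies
\[
 u_n=\frac{\gamma_n}{\beta_n\beta_{n-1}(1+u_{n-1})},
\]
whose right-hand side is $\mathcal O(t_n^{-4})$. This map is a contraction on $\{\sup_n|t_n|^4|u_n|\le K\}$, with the initial value at $n=N'$ chosen arbitrarily, for instance $u_{N'}=0$. For the recessive solution I write $\rho^-_{n-1}=-\gamma_n/(\beta_n+\rho^+_n)$ with the roles reversed, that is, $\rho^-_{n-1}=-\gamma_n/(\beta_n+\rho^-_n)$. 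I solve this backward: setting $\rho^-_{n-1}=-(\gamma_n/\beta_n)(1+w_n)$, the map $w\mapsto\mathcal O(t_n^{-4})\cdot(1+w_{n+1})$ is again a contraction on a weighted sup-norm space over $n\ge N'$. The contraction constants are $\mathcal O(|t_{N'}|^{-4})$, so they are small once $N'$ is large, uniformly in $t_0$.

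\emph{Step 3: passing from ratios to solutions.} I define $P_n=\prod_{k<n}\rho_k$, normalised at $n=N'$. Each factor has the form (exact leading ratio)$\cdot(1+\epsilon_k)$ with $\epsilon_k=\mathcal O(t_k^{-2})$. Since $|t_k|=|t_0||q|^k$ grows geometrically, $\sum_k|\epsilon_k|$ converges, and its tail beyond $n$ is $\mathcal O(t_n^{-2})$. Hence $\prod_{k<n}(1+\epsilon_k)=c\,\{1+\mathcal O(1/t_n^2)\}$ with $c=\prod_{k\ge N'}(1+\epsilon_k)\neq0$, and dividing by $c$ gives exactly \eqref{eq:Pminus} and \eqref{eq:Pplus}.

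\emph{Step 4: remaining properties.} Holomorphy in $t_0$ follows because every iterate of the contractions is holomorphic in $t_0$ and the convergence is uniform on $\overline U$. The same applies to the convergent product defining $c$. Nonvanishing on the tail is immediate from the factor $1+\mathcal O(1/t_n^2)$. Linear independence follows because the Wronskian computed in Step 1 is nonzero. Finally, $|P^\mp_n|\approx|q|^{\mp n^2}\cdot(\text{geometric in }n)$, which beats every power $|t_n|^{\pm M}=|t_0|^{\pm M}|q|^{\pm Mn}$; this gives the faster-than-algebraic decay of $P^-$ and growth of $P^+$.
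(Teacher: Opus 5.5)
Your proposal is correct. For $P^+$ it is essentially the paper's argument. The paper also iterates $R_n=-\beta_n-\gamma_n/R_{n-1}$ forward from $R_N=at_N^2/q$, using $\beta_n=-(a/q)t_n^2+\mathcal O(1)$ and $\gamma_n=q^{-2}+\mathcal O(1/t_n^{2})$, and then normalises by the convergent product of relative corrections. Your constant is therefore $a/q$, and the bookkeeping you feared is one line: the denominator of $\beta_n$ is asymptotic to $1/t_{n-1}=q/t_n$. Avoid calling this constant $\kappa$, which the paper reserves for $ak_1k_2$.

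The genuine difference is the recessive solution. The paper does not solve a second Riccati problem; it uses reduction of order,
\[
 P_n^-=at_0^4P_n^+\sum_{j\ge n}\frac{D_{j+1}D_j}{P_{j+1}^+P_j^+},
\]
whose summands have successive ratio $\mathcal O(1/t_j^{4})$. Identity \eqref{eq:ExactDiff} then gives $\mathcal AP^-=0$ and the exact Wronskian $P_n^+P_{n+1}^--P_{n+1}^+P_n^-=-at_0^4D_{n+1}D_n$ at once. This route costs one line once $P^+$ exists and fixes the normalisation of $P^-$ without a second infinite product. It is also the same kernel that reappears in the summation operator $\mathcal K$, so its estimates are of the type used in Lemma~\ref{lem:summation-bound}.

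Your backward contraction for $\rho^-_{n-1}=-(\gamma_n/\beta_n)(1+w_n)$ is the Pincherle (minimal-solution) route. It treats both modes symmetrically, does not need \eqref{eq:ExactDiff}, and characterises $P^-$ intrinsically through the unique small solution of the backward Riccati equation. The price is a second contraction, a second product normalisation, and a Wronskian constant recovered only afterwards. That recovery works: constancy of the Wronskian divided by $D_{n+1}D_n$, with $n\to\infty$, gives exactly $at_0^4$ up to sign. Since the recessive solution is unique up to the scalar fixed by \eqref{eq:Pminus}, your $P^-$ coincides with the paper's.

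Two small slips, neither affecting the argument:
\begin{itemize}
\item The dominant recursion should read $u_n=-\gamma_n/(\beta_n\beta_{n-1}(1+u_{n-1}))$.
\item The first displayed formula for $\rho^-_{n-1}$, the one involving $\rho^+_n$, should simply be dropped in favour of $\rho^-_{n-1}=-\gamma_n/(\beta_n+\rho^-_n)$.
\end{itemize}
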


\begin{proof}
The background expansion $D$ gives
\[
 \beta_n=-(a/q) t_n^2+\mathcal O(1),\qquad \gamma_n=1/q^2+\mathcal O(1/t_n^{2}).
\]
For a preliminary dominant solution, choose $P_N=a^Nt_0^{2N}q^{N^2-2N}$ at a sufficiently large $N$, and set $R_n=P_{n+1}/P_n$.  Starting with $R_N=at_N^2/q$, the recurrence
\[
 R_n=-\beta_n-\gamma_n/R_{n-1},\qquad n\ge N+1,
\]
preserves $R_n=at_n^2/q+\mathcal O(1)$ uniformly: a bound of this form at $n-1$ makes $1/R_{n-1}=\mathcal O(1/t_{n}^{2})$, and the displayed bounds on $\beta_n,\gamma_n$ give the next step.  A fixed error bound larger than the bounded part of $-\beta_n$ is preserved after increasing $N$. In particular, no denominator vanishes, and the ratios are holomorphic in $t_0$. The product of their relative corrections converges to a nonzero holomorphic factor, since $\sum_{n\ge N}1/|t_n|^{2}<\infty$. Dividing by that factor normalises the solution to \eqref{eq:Pplus}; the remaining tail of the product is $1+\mathcal O(1/t_n^{2})$.

Reduction of order now gives the recessive solution directly:
\[
 P_n^-=at_0^4P_n^+\sum_{j=n}^{\infty} \frac{D_{j+1}D_j}{P_{j+1}^+P_j^+}.
\]
Successive summands above have ratio $\mathcal O(1/t_j^{4})$, so the sum is its first term times $1+\mathcal O(1/t_n^{4})$. Together with \eqref{eq:Pplus} and the background expansion \eqref{eq:algebraic-series}, this yields \eqref{eq:Pminus}. Moreover,
\[
 P_n^+P_{n+1}^--P_{n+1}^+P_n^-=-at_0^4D_{n+1}D_n\ne0.
\]
Identity \eqref{eq:ExactDiff} proves that $\mathcal AP^-=0$, and the nonzero determinant proves independence. Uniform convergence of the series proves holomorphy in the base point.
\end{proof}

Fix such a pair \eqref{eq:Pminus}--\eqref{eq:Pplus} and restrict it to tails starting at $N$. For a forcing sequence $h=(h_n)_{n\ge N}$, define for $n\ge N$
\begin{equation}
 (\mathcal Kh)_n=-P_n^+\sum_{j=n}^{\infty} \frac{D_{j+1}D_j}{P_{j+1}^+P_j^+} \sum_{l=N+1}^{j}\frac{P_l^+h_l}{D_{l+1}D_l}, \label{eq:nested-summation-operator}
\end{equation}
with an empty inner sum interpreted as zero. The finite lower limit in the inner sum allows algebraically decaying forcing; convergence and bounds are proved below. Whenever the series converges absolutely, subtraction at adjacent indices gives
\[
 \frac{P_n^+(\mathcal Kh)_{n+1}-P_{n+1}^+(\mathcal Kh)_n} {D_{n+1}D_n} =\sum_{l=N+1}^{n}\frac{P_l^+h_l}{D_{l+1}D_l}.
\]
Taking its difference and using \eqref{eq:ExactDiff} proves
\begin{equation}
 (\mathcal A\mathcal Kh)_n=h_n,\qquad n\ge N+1. \label{eq:right-inverse}
\end{equation}
The upper limit at infinity in \eqref{eq:nested-summation-operator} gives $(\mathcal Kh)_n/P_n^+\to0$ and removes the free dominant homogeneous component. This is the nested summation construction used in \cite[\S3 and Appendix B]{Joshi2015}.

\begin{lemma}[Bounds for the summation operator]
\label{lem:summation-bound}
For $r\ge0$, let $X_r$ be the Banach space of sequences on $n\ge N$ with norm $\|h\|_r=\sup_{n\ge N}|t_n|^r|h_n|$. For each $r$ there are constants $N_r,C_r$ such that, uniformly for $N\ge N_r$ and $t_0\in\overline U$,
\begin{equation}
 \|\mathcal Kh\|_{r+2}\le C_r\|h\|_r. \label{eq:weighted-kernel-bound}
\end{equation}
Consequently the operator norm satisfies $\|\mathcal K\|_{X_r\to X_r}\le C_r/|t_N|^{2}$. For any fixed earlier starting index on which the pair $P^\pm$ is nonzero, $\mathcal K:X_r\to X_{r+2}$ remains bounded, with a constant allowed to depend on that starting index.
\end{lemma}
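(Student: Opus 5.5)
We bound the nested sum \eqref{eq:nested-summation-operator} directly, using the asymptotics of Lemma~\ref{lem:linear-dichotomy} for $P^+$ and the background expansion $D_n=-1/t_n\{1+\mathcal O(1/t_n^2)\}$. The key quantities are the ratios of consecutive weights. From Lemma~\ref{lem:linear-dichotomy} we have $P^+_{l+1}/P^+_l=(a/q)t_l^2\{1+\mathcal O(1/t_l^2)\}$, and $D_{l+1}/D_l\to 1/q$. Hence there is a constant $c$ with
\[
 \left|\frac{P^+_l}{D_{l+1}D_l}\right|\le c\,|t_l|^{-2}\left|\frac{P^+_{l+1}}{D_{l+2}D_{l+1}}\right|
\]
for $l\ge N_r$, with $c$ uniform in $t_0\in\overline U$. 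In particular the inner-sum weights $w_l:=|P_l^+/(D_{l+1}D_l)|$ grow super-geometrically, and $w_{l-1}/w_l=\mathcal O(1/|t_l|^2)$.

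\textbf{Step 1 (inner sum).} For $h\in X_r$ we have $|h_l|\le\|h\|_r|t_l|^{-r}$. Since $|t_{l-1}|^{-r}/|t_l|^{-r}=|q|^r$ is a fixed constant, the ratio of consecutive terms $w_l|t_l|^{-r}$ is at most $C|q|^r/|t_l|^2$. For $N\ge N_r$ this is less than $1/2$. Comparing with a geometric series then gives
\[
 \sum_{l=N+1}^{j}w_l|h_l|\le 2\,w_j|t_j|^{-r}\|h\|_r.
\]

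\textbf{Step 2 (outer sum).} Substituting Step 1, the $j$-th outer term is bounded by
\[
 2\left|\frac{D_{j+1}D_j}{P^+_{j+1}P^+_j}\right|w_j|t_j|^{-r}\|h\|_r=2\,\frac{|t_j|^{-r}}{|P^+_{j+1}|}\|h\|_r.
\]
Because $|P^+_{j+1}|/|P^+_{j+2}|=\mathcal O(1/|t_j|^2)$, this sequence again decays super-geometrically in $j$. So the outer sum from $j=n$ is at most twice its first term, which gives $|(\mathcal Kh)_n|\le 4|P_n^+|\,|t_n|^{-r}|P^+_{n+1}|^{-1}\|h\|_r$. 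Since $|P^+_n/P^+_{n+1}|\le C/|t_n|^2$, we obtain $|t_n|^{r+2}|(\mathcal Kh)_n|\le C_r\|h\|_r$, which is \eqref{eq:weighted-kernel-bound}. The absolute convergence shown here also justifies the earlier derivation of \eqref{eq:right-inverse}.

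\textbf{Step 3 (consequences).} The operator-norm bound follows because $|t_n|^{-2}\le|t_N|^{-2}$ for $n\ge N$, since $|t_n|=|t_0||q|^n$ is increasing in $n$. For an earlier fixed starting index $N'$ on which $P^\pm$ are nonzero, we split the sums at $N_r$. Only finitely many indices lie in $[N',N_r]$. On these, all weights are bounded above and $P^+$ is bounded below uniformly on $\overline U$, by continuity and compactness. These finitely many terms therefore contribute a constant times $\|h\|_r$, and the tail is handled as in Steps 1--2.

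The main obstacle is making the ratio estimates uniform. We need $N_r$ large enough that the factor $|q|^r$ coming from the weight $|t_l|^{-r}$ is absorbed by the decay $1/|t_l|^2$, while keeping all the $\mathcal O$-constants of Lemma~\ref{lem:linear-dichotomy} uniform in $t_0\in\overline U$. This is where the dependence of $N_r$ on $r$ enters.
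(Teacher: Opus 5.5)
Your proposal is correct and follows the paper's proof essentially step for step. You bound the inner sum by a multiple of its upper endpoint using a ratio at most $1/2$, bound the outer sum by its first term, and finish with $|P_n^+/P_{n+1}^+|\le C/|t_n|^2$, splitting at $N_r$ for an earlier starting index. In that last step it is worth saying explicitly, as the paper does, that the finite initial part of the inner sum enters every later output as a fixed multiple of the recessive tail $P_n^+\sum_{j\ge n}D_{j+1}D_j/(P_{j+1}^+P_j^+)$. Its super-algebraic decay is what puts this contribution in $X_{r+2}$, rather than merely keeping it bounded.
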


\begin{proof}
Uniformly on late tails, $|D_{j+1}D_j|\asymp1/|t_j|^{2}$ and $|P_j^+/P_{j+1}^+|\le C/|t_j|^{2}$. For $h\in X_r$, the inner summands in \eqref{eq:nested-summation-operator} are bounded by $C\|h\|_r|P_l^+||t_l|^{2-r}$. The ratio of these majorants in the decreasing direction of $l$ is at most $C_r/|t_l|^{2}$, hence at most $1/2$ after increasing $N_r$. The inner sum is therefore bounded by a constant multiple of its upper endpoint, giving
\[
 |(\mathcal Kh)_n| \le C_r\|h\|_r|P_n^+| \sum_{j=n}^{\infty}\frac{1}{|P_{j+1}^+||t_j|^{r}}.
\]
Successive terms of this last series also have a ratio at most $C_r/|t_{j+1}|^{2}\le1/2$. Bounding it by its first term gives $|(\mathcal Kh)_n|\le C_r\|h\|_r/|t_n|^{r+2}$ and proves absolute convergence.  For a fixed earlier starting index, split the inner sum at $N_r$. Its finite initial part contributes a multiple of the recessive tail in the proof of Lemma~\ref{lem:linear-dichotomy}; the finitely many initial output values are bounded separately. This proves the final assertion.
\end{proof}

\subsection{Construction of the one-sided family}

Nonlinear estimates of Lemma~\ref{lem:nonlinear-estimates} and the summation bound of Lemma~\ref{lem:summation-bound} supply a contraction with a free coefficient multiplying the recessive mode. For $v=(v_n)_{n\ge N}$, set boundary $(\mathcal Nv)_N=0$ and then $(\mathcal Nv)_n=\mathcal N_n(v_n,v_{n-1})$ for $n\ge N+1$.

\begin{theorem}[One-sided quicksilver family]
\label{thm:one-sided-existence}
Fix the base domain $U$ above and a nonempty compact set $K\subset\mathbb C$. There are $N_*$ and $\rho>0$ such that, for every $(k,t_0)\in K\times\overline U$, the summation equation
\begin{equation}
 f_n\equiv f_n(k)=D_n+v_n(k),\qquad v(k)=kP^-+\mathcal K\mathcal Nv(k) \label{eq:one-sided-fixed-point}
\end{equation}
has a unique solution in $\{v\in X_4:\|v\|_4\le\rho\}$, with starting index $N=N_*$ in $\mathcal K$.  It solves \eqref{eq:qPII} at $n\ge N_*+1$ and satisfies $v(k)\in\bigcap_{r\ge0}X_r$ uniformly on these parameter sets. Thus $f_n\sim D_n$ to all algebraic orders. Each $f_n$ is holomorphic in $(k,t_0)$ near $K\times\overline U$.  For fixed $k$, the disjoint dilates $q^nU$ therefore define a holomorphic solution on $\Omega_{N_*}$, with the equation imposed off its initial layer.
\end{theorem}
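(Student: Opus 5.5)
The approach is a contraction argument for the map $T_k(v):=kP^-+\mathcal K\mathcal Nv$ on the closed ball $B_\rho:=\{v\in X_4:\|v\|_4\le\rho\}$. The argument rests on three earlier results. Lemma~\ref{lem:linear-dichotomy} shows that $P^-$ lies in every $X_r$ and that its norm decays superalgebraically in $|t_N|$. Lemma~\ref{lem:nonlinear-estimates} controls $\mathcal N$, and Lemma~\ref{lem:summation-bound} supplies the gain of two powers of $t$.

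\emph{Well-definedness of $\mathcal N$ on $B_\rho$.} Since $|D_n|\asymp1/|t_n|$, the condition $\|v\|_4\le\rho$ gives $|v_n|\le\rho/|t_n|^4\le\delta|D_n|$ once $N$ is large. Thus $(v_n,v_{n-1})$ lies in the polydisc \eqref{eq:shrinking-polydisc}, using $|t_{n-1}|\asymp|t_n|$.

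\emph{Self-map.} By \eqref{eq:nonlinear-estimate} with $M=6$,
\[
|\mathcal N_n(v_n,v_{n-1})|\le C|t_n|^3\rho^2|t_n|^{-8}+\eta|t_n|^{-6}\le (C\rho^2+\eta)|t_n|^{-5},
\]
so $\|\mathcal Nv\|_5\le C\rho^2+\eta$. Lemma~\ref{lem:summation-bound} with $r=5$ then gives $\|\mathcal K\mathcal Nv\|_7\le C_5(C\rho^2+\eta)$, hence $\|\mathcal K\mathcal Nv\|_4\le C_5(C\rho^2+\eta)|t_N|^{-3}$. Together with $\sup_K|k|\,\|P^-\|_4\to0$ as $N\to\infty$, we fix $\rho$ and then take $N_*$ large so that $T_k(B_\rho)\subset B_\rho$.

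\emph{Contraction.} By \eqref{eq:nonlinear-lipschitz} and the mean value inequality on the convex polydisc,
\[
|\mathcal N_n(v)-\mathcal N_n(w)|\le\bigl(C|t_n|^3\rho|t_n|^{-4}+\eta|t_n|^{-1}\bigr)\bigl(|v_n-w_n|+|v_{n-1}-w_{n-1}|\bigr).
\]
This yields $\|\mathcal Nv-\mathcal Nw\|_5\le C'\|v-w\|_4$. Applying $\mathcal K$ and dropping back to $X_4$ gives a Lipschitz constant $C_5C'|t_N|^{-3}<1/2$. Banach's theorem then gives a unique fixed point $v(k)\in B_\rho$, uniformly for $(k,t_0)\in K\times\overline U$.

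\emph{Solving \eqref{eq:qPII}.} Applying $\mathcal A$ and using \eqref{eq:right-inverse} together with $\mathcal AP^-=0$ gives $(\mathcal Av)_n=\mathcal N_n(v_n,v_{n-1})$ for $n\ge N_*+1$, which is equivalent to \eqref{eq:nonlinear-perturbation}. The boundary convention $(\mathcal Nv)_N=0$ plays no role there, since the inner sum starts at $N+1$.

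\emph{Flatness.} I would bootstrap. Suppose $v\in X_r$ with $r\ge4$. Then $\|\mathcal Nv\|_{r+1}$ is finite: the quadratic part is $O(|t|^{3-2r})$ with $3-2r\le -r-1$, and the $\eta$ term is handled by choosing $M=r+1$. Lemma~\ref{lem:summation-bound} then places $\mathcal K\mathcal Nv$ in $X_{r+3}$, and since $kP^-\in X_{r+3}$, we get $v\in X_{r+3}$. Because the norm bounds are uniform on $K\times\overline U$ at every step, induction gives $v\in\bigcap_rX_r$ uniformly. The only subtlety is that $\eta$ depends on $M$ while $N_*$ is fixed. I would use the final clause of Lemma~\ref{lem:summation-bound}, which allows the fixed starting index with a constant depending on it; the constants may grow with $r$, which is harmless.

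\emph{Holomorphy.} Holomorphy in $(k,t_0)$ follows because the iterates $T_k^m(0)$ are holomorphic in $(k,t_0)$ on a neighbourhood of $K\times\overline U$. This needs slightly enlarged compact sets, on which the same estimates hold. Each $\mathcal N_n$ is holomorphic, and $P^\pm$ and $D$ are holomorphic in $t_0$. The nested sums converge uniformly, so uniform convergence of the iterates in $X_4$, and hence of each coordinate, preserves holomorphy. Since the dilates $q^nU$ are disjoint, defining $f(q^nt_0):=f_n$ gives a holomorphic function on $\Omega_{N_*}$.

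I expect the main obstacle to be the bookkeeping of uniformity. One must check that the Lipschitz constant for $\mathcal K$ holds on a complex neighbourhood of $\overline U$, so that holomorphy is preserved, and that $\|P^-\|_4$ is small uniformly there.
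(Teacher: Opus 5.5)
Your proposal is correct and follows the paper's proof step for step. It uses the same contraction $v\mapsto kP^-+\mathcal K\mathcal Nv$ on the $X_4$ ball, the same appeal to \eqref{eq:right-inverse} and $\mathcal AP^-=0$, a bootstrap for flatness, and holomorphy via Picard iterates on slightly enlarged parameter sets. The differences are cosmetic. You estimate $\mathcal Nv$ in $X_5$ and gain $|t_N|^{-3}$ through $\mathcal K:X_5\to X_7$, so you need only boundedness of $\eta$ rather than $\eta\to0$. Your bootstrap $r\mapsto r+3$ explicitly invokes the fixed-starting-index clause of Lemma~\ref{lem:summation-bound}, which is more careful than the paper's $r\mapsto 2r-1$, where that point is left implicit.
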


\begin{proof}
Fix $0<\delta<1$ in Lemma~\ref{lem:nonlinear-estimates} and $\rho>0$. For sufficiently large $N$, every $v\in X_4$ with $\|v\|_4\le\rho$ has $(v_n,v_{n-1})$ in the relative polydisc \eqref{eq:shrinking-polydisc} for $n\ge N+1$, because $|D_n|\asymp1/|t_n|$. For each $(k,t_0)\in K\times\overline U$, define $\mathcal T_kv=kP^-+\mathcal K\mathcal Nv$ on this ball. For any $v,w$ in the ball, Lemmas~\ref{lem:nonlinear-estimates} and \ref{lem:summation-bound} give
\begin{align*}
 \|\mathcal T_kv-kP^-\|_4 &\le C|t_N|^{-2}(\eta+\rho^2/|t_N|),\\
 \|\mathcal T_kv-\mathcal T_kw\|_4 &\le C|t_N|^{-2}(\eta+\rho/|t_N|) \|v-w\|_4.
\end{align*}
Here fixed powers of $|q|$ are absorbed into $C$, and $\eta$ is a common bound for the choices $M=0,4$ in Lemma~\ref{lem:nonlinear-estimates}. Since $\|P^-\|_4\to0$, $\eta\to0$ and $\inf_{t_0\in\overline U}|t_N|\to\infty$, a single $N_\ast$ makes $\mathcal T_k$ a contraction of this ball into itself, uniformly for $(k,t_0)\in K\times\overline U$. Its fixed point solves \eqref{eq:nonlinear-perturbation}, and hence \eqref{eq:qPII}, by \eqref{eq:right-inverse} and $\mathcal AP^-=0$.

For this fixed starting index, write $v=v(k)$ for the fixed point. If $v\in X_r$ with $r\ge4$, then \eqref{eq:nonlinear-estimate} gives $\mathcal Nv\in X_{2r-3}$. The gain of two powers in Lemma~\ref{lem:summation-bound} and the flatness of $P^-$ imply $v\in X_{2r-1}$. Iterating $r\mapsto2r-1$ from $r=4$ proves membership in every $X_r$, with uniform bounds on the stated parameter sets. The estimates can be made on slightly larger compact parameter sets. The Picard iterates and the summation kernels are holomorphic, and their locally uniform convergence proves the claimed holomorphic dependence. The disjointness of the dilates then gives the asserted function on the $q$-domain.
\end{proof}

\begin{remark}[Normalisation of the parameter]
For the fixed starting index $N=N_\ast$, the summation operator has $P_N^+(\mathcal Kh)_{N+1}-P_{N+1}^+(\mathcal Kh)_N=0$. Consequently, the parameter in \eqref{eq:one-sided-fixed-point} is precisely
\[
 k=\frac{P_N^+v_{N+1}(k)-P_{N+1}^+v_N(k)} {P_N^+P_{N+1}^--P_{N+1}^+P_N^-}.
\]
The denominator is nonzero by Lemma~\ref{lem:linear-dichotomy}, so distinct parameters give distinct solutions. This fixes an orbitwise integration constant at the starting index.
\end{remark}

\section{Formal bimodal expansion}
\label{sec:formal-bimodal}

We develop a two-parameter family of formal solutions of \eqref{eq:qPII} near infinity retaining both exponential modes suggested by \eqref{eq:Pminus}--\eqref{eq:Pplus}. Assuming a region where these exponential modes abandon their dominant and recessive roles, their nonlinear interaction according to \eqref{eq:qPII} perturbs the background series $D$ and the phase seen in \eqref{eq:Pminus}--\eqref{eq:Pplus}. Under our proposed coefficient formulas, transasymptotic summation then gives functions consistent with an expansion about a simple pole.

In Section~\ref{sec:meromorphic-realisation}, we fix an $r$-theta function realisation of the exponential modes, providing a concrete interpretation of the appropriate anti-Stokes-like boundary regions, and we proceed to establish pole-location estimates for a one-sided family of true solutions.

\subsection{Preliminaries}

We introduce the exponential mode
\begin{equation}\label{eq:QExpFactor}
 Z:=Z_n=(at_0^2\sigma)^nq^{n^2}\qquad\text{satisfying}\qquad Z_{n+1}=qa\sigma t_n^2Z_n.
\end{equation}
The constant $\sigma\ne0$ is a \emph{nonlinear phase} to be determined by \qPII{}. Throughout the remainder of the paper, we make use of the transformed function $u(t)=-tf(t)$, further writing $u_n=u(t_n)=-t_nf_n$. The proposed bimodal formal expansion as $|t|\to\infty$ is
\begin{equation}
 u_n=-\sum_{m\in\mathbb Z}\sum_{j\geq0} c_{m,j}\epsilon^{3|m|+2j-2}Z^m,\qquad \epsilon:=1/t_n, \quad c_{0,0}=0. \label{eq:u-bimodal-series}
\end{equation}
We call $m$ the \emph{harmonic number} and $j$ the \emph{algebraic correction grade}. The choice $c_{0,0}=0$ selects the (zero-harmonic) vanishing background series $f=\mathcal{O}(1/t)$; this differs from $D$, depending on the two free parameters. For coefficient matching, $Z$ is treated as a formal Laurent variable.

The two free parameters (\emph{amplitudes}) are $k_1:=c_{1,0}$ and $k_2:=c_{-1,0}$, and we denote their scaled product $\kappa:=ak_1k_2$. We utilise the assumption of regular coefficientwise limits as either amplitude vanishes with the other held fixed, such that the one-sided expansions of Section \ref{sec:quicksilver} are reproduced.

After setting
\begin{equation}
 X_+=\epsilon^3Z,\qquad\text{and}\qquad X_-=\epsilon^3/Z, \label{eq:bimodal-scales}
\end{equation}
grouping \eqref{eq:u-bimodal-series} by algebraic grade gives
\begin{equation}
 u_n=-\sum_{j\geq1}d_j\epsilon^{2j-2} -\sum_{m\geq1}\sum_{j\geq0}\epsilon^{2j-2} \left(c_{m,j}X_+^m+c_{-m,j}X_-^m\right),\quad d_j:=c_{0,j}. \label{eq:u-bigraded-series}
\end{equation}
The positive and negative harmonic tails are related by the formal reflection
\[
 \mathcal R:\qquad X_+\leftrightarrow X_-,\quad m\mapsto-m,\quad p\mapsto 1/p,\quad k_1\leftrightarrow k_2.
\]
With $a$, $\kappa$, $\sigma$, and $\epsilon$ fixed, $\mathcal R$ interchanges the positive and negative harmonic equations. We therefore write $c_{-m,j}=\mathcal R(c_{m,j})$ and derive only the positive equations. The zero-harmonic formulas are invariant under $\mathcal R$.

\subsection{Leading coefficients and phase selection}

We first solve the leading-coefficient equations for fixed $\sigma$, then select the phase by requiring regular limits as either of the amplitudes vanish.

\begin{proposition}[Leading harmonic coefficients]
\label{prop:leading-harmonic-coefficients}
Suppose $\kappa\sigma\ne0$ and $\kappa\sigma^2\ne1$.  The leading coefficient equations of \eqref{eq:u-bigraded-series} determine
\begin{equation}
 d_1=1/(\kappa\sigma^2-1). \label{eq:leading-zero-mode}
\end{equation}
If $d_1^2\ne\kappa\sigma$, the leading harmonics are given, for $m\geq1$, by
\[
 c_{m,0}=k_1^mp^{m-1}a^{1-m}\hat c_m\qquad\text{and}\qquad c_{-m,0}=k_2^mp^{1-m}a^{1-m}\hat c_m,
\]
where
\begin{equation}
 \begin{aligned}
 \hat c_m={}&\frac{d_1^2\sigma-1}{\sigma(d_1^2-\kappa\sigma)} \left(-\frac{d_1}{\kappa\sigma^2}\right)^{m-1}-\frac{1}{d_1\sigma(d_1^2-\kappa\sigma)} \left(-\frac{1}{d_1\sigma}\right)^{m-1}.
 \end{aligned}
 \label{eq:leading-tail-decomposition}
\end{equation}
In particular, $\hat c_1=1$, and the normalised sequence $\hat c_m$ is invariant under $\mathcal R$.
\end{proposition}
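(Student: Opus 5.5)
The plan is to substitute the bigraded ansatz \eqref{eq:u-bigraded-series} into \qPII{} rewritten for $u_n=-t_nf_n$. Since $t_{n+1}t_{n-1}=t_n^2$ and $f(f+t)=-u(t^2-u)/t^2$, equation \eqref{eq:qPII} becomes the polynomial relation
\[
 u_{n+1}u_{n-1}u_n\bigl(t_n^2-u_n\bigr)=-a\,t_n^4\,(1-u_n).
\]
Next I will record how the scales shift. From \eqref{eq:QExpFactor} and $\epsilon_{n\pm1}=q^{\mp1}\epsilon$ one gets
\[
 X_+(n+1)=\tfrac{a\sigma}{p}\,\epsilon^{-2}X_+,\qquad X_-(n-1)=a\sigma p\,\epsilon^{-2}X_-,
\]
while $X_+(n-1)$ and $X_-(n+1)$ acquire a factor $\epsilon^{2}$. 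The leading-coefficient equations are then the coefficients of $Z^m$ at the top power of $t$. At that order, $u_n\approx -t^2\bigl(d_1+\sum_{m\ge1}(c_{m,0}X_+^m+c_{-m,0}X_-^m)\bigr)$. In the shifted factors only the amplified harmonics survive: positive harmonics in $u_{n+1}$ and negative harmonics in $u_{n-1}$. The sub-leading shifted contributions are pushed to higher grade $j$. A cross product $X_+^m(n+1)\,X_-^m(n-1)$ contributes $(a\sigma)^{2m}\epsilon^{-4m}(X_+X_-)^m=(a\sigma)^{2m}\epsilon^{2m}$, so the products $c_{m,0}c_{-m,0}$ enter at the same order as $d_1$. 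This is where $\kappa\sigma^2=a^2k_1k_2\sigma^2/a$ appears.

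I will organise the leading equations with generating functions. Set $G_+(x)=\sum_{m\ge1}c_{m,0}(a\sigma x/p)^m$, the value of the positive tail after one forward shift, and define $G_-$ by $\mathcal R$. Define also the unshifted tails $H_\pm$. The leading balance then reads as a single identity of formal Laurent series in $Z$:
\[
 (d_1+G_+ + \dots)(d_1+G_-+\dots)(d_1+H_++H_-)(1+d_1+H_++H_-)=a\cdot(\text{linear in }H_\pm)\text{ at top order}.
\]
Matching the $Z^1$ coefficient, which is linear in $c_{1,0}$, shows that it is consistent with $c_{1,0}=k_1$ free, provided the phase and $d_1$ are compatible. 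Matching $Z^0$ gives an algebraic relation between $d_1$ and the cross sums. Matching $Z^m$ for $m\ge2$ gives a recurrence for $c_{m,0}$ in terms of lower ones; the factors $p^{m-1}a^{1-m}k_1^m$ come from the shift weights $(a\sigma/p)^m$ and the normalisation. My expectation is that, after removing these weights, the recurrence for $\hat c_m$ is linear of order two with constant coefficients. Its characteristic roots would be $r_1=-d_1/(\kappa\sigma^2)$ and $r_2=-1/(d_1\sigma)$.

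Granting this, the form of \eqref{eq:leading-tail-decomposition} follows by solving the recurrence with $\hat c_1=1$ and $\hat c_2$ read off from the $Z^2$ equation. The distinct-roots condition $r_1\ne r_2$ is exactly $d_1^2\ne\kappa\sigma$, which explains that hypothesis. One checks that the two displayed amplitudes sum to $1$ at $m=1$. Equivalently, $G_+$ is a rational function with simple poles at $x=1/r_1$ and $x=1/r_2$; this is the rational summation used later. Reinserting the geometric sums into the $Z^0$ equation collapses the cross sums to a closed expression. I expect the equation then reduces to $d_1(\kappa\sigma^2-1)=1$, giving \eqref{eq:leading-zero-mode}; the hypothesis $\kappa\sigma^2\ne1$ is what makes this solvable. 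Invariance of $\hat c_m$ under $\mathcal R$ holds because $r_1,r_2$ and the coefficients depend only on $d_1,\kappa,\sigma$, which $\mathcal R$ fixes. The $p^{\pm(m-1)}$ prefactors absorb exactly the $p\mapsto1/p$ asymmetry of the shift weights.

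The main obstacle is the step just described: the order bookkeeping, so that exactly the amplified shifted harmonics survive at leading order, together with showing that the $Z^0$ equation closes despite containing all cross products $c_{m,0}c_{-m,0}$. I will treat the leading balance entirely at the level of generating functions, using the already-solved positive and negative tails, rather than coefficient by coefficient. If the recurrence turns out not to have constant coefficients, the fallback is to solve the leading balance directly as an algebraic equation for $G_+$ and read off its partial fractions.
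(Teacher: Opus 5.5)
\textbf{There is a genuine gap: the leading-order bookkeeping, which is the whole content of the proposition, is wrong where it matters.} Your outline matches the paper's: rewrite \qPII{} for $u$, match coefficients of $Z^m$, obtain a constant-coefficient three-term recurrence, and solve it from two initial values. The problem is in the step you defer, namely deciding which products enter each leading equation.

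Since $c_{0,0}=0$, the zero mode of $u_n$ is $-d_1$ at order $t^0$; it is not part of $-t^2(d_1+\cdots)$. Using your own shift rules, the product of harmonic $m$ of $u_{n+1}$, the zero mode of $u_n$ and harmonic $-m$ of $u_{n-1}$ in $\epsilon^2u_{n+1}u_nu_{n-1}$ has order $\epsilon^{2m-2}$. So at $\epsilon^0Z^0$ only $m=1$ survives:
\[
 \epsilon^2(-a\sigma k_1\epsilon^{-1}Z)(-d_1)(-a\sigma k_2\epsilon^{-1}Z^{-1})=-a\kappa\sigma^2d_1 ,
\]
balanced by $-a(1+d_1)$ from $a(u_n-1)$. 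This gives \eqref{eq:leading-zero-mode} at once, with no tails involved. Inserting all $c_{m,0}c_{-m,0}$ into this equation, as you propose, would add a non-vanishing series and would not reduce to $d_1(\kappa\sigma^2-1)=1$.

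The same error affects the recurrence. The leading $\epsilon^mZ^m$ equation contains exactly four products:
\begin{itemize}
\item harmonic $m$ of $u_{n+1}$ times two zero modes;
\item harmonic $m+1$ of $u_{n+1}$ times harmonic $-1$ of $u_{n-1}$;
\item harmonic $m-1$ of $u_{n+1}$ times harmonic $1$ of $u_n$;
\item harmonic $m$ of $u_{n+1}$ times both first harmonics.
\end{itemize}
Hence the $Z^m$ equation determines $c_{m+1,0}$, not $c_{m,0}$. In particular, $\hat c_2=-(1+d_1^3\sigma)/(d_1^2\kappa\sigma^3)$ comes from the $Z^1$ equation, where $au_n$ contributes $-ak_1\epsilon Z$. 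That equation is not a compatibility condition on the phase. Nor can $\hat c_2$ be read off from the $Z^2$ equation, which also contains $\hat c_3$.

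You only \emph{expect} the recurrence and its roots, which in effect are read off from the target. So neither $\hat c_{m-1}+\sigma(d_1+\kappa\sigma/d_1)\hat c_m+\kappa\sigma^3\hat c_{m+1}=0$ nor the initial value $\hat c_2$ is actually derived.

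Your four-factor ``leading balance'' has the same defect. At leading order $1-\epsilon^2u_n$ is $1$, not $1+d_1+H_++H_-$, and the full tails $G_-$ and $H_\pm$ are never simultaneously leading.

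\textbf{How to repair it.} The generating-function idea does work once the scaling is fixed. Hold $y=\epsilon Z$ fixed (this is your $x$). Then
\[
 u_{n+1}=-p\epsilon^{-2}G_+(y)+\mathcal O(1),\qquad u_n=-d_1-k_1y+\mathcal O(\epsilon^2),\qquad u_{n-1}=-d_1-a\sigma k_2/y+\mathcal O(\epsilon^2),
\]
so the order-one part of \eqref{eq:scaled-bimodal-equation} is
\[
 pG_+(y)\,(d_1+k_1y)\,(d_1+a\sigma k_2/y)=a(1+d_1+k_1y).
\]
Its coefficients at $y^0$, $y^1$ and $y^m$ ($m\ge2$) are exactly the $d_1$ relation, the boundary equation \eqref{eq:leading-c-boundary} and the recurrence \eqref{eq:leading-c-recurrence}. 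Alternatively, solve for $G_+$ directly. It is a rational function with at most simple poles at $y=-d_1/k_1$ and $y=-a\sigma k_2/d_1$, which are distinct exactly when $d_1^2\ne\kappa\sigma$. Partial fractions then give \eqref{eq:leading-tail-decomposition} without solving a recurrence.

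That would be a legitimate and somewhat slicker alternative to the paper's route of normalising the recurrence and factoring its characteristic polynomial. Your proposal as written does not reach it.
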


\begin{proof}
The prescribed shifts give
\begin{align*}
 u_{n+1} &=-\sum_{m\in\mathbb Z}\sum_{j\geq0} c_{m,j}q^{m-3|m|-2j+2}(a\sigma)^m \epsilon^{3|m|-2m+2j-2}Z^m,\\
 u_{n-1} &=-\sum_{m\in\mathbb Z}\sum_{j\geq0} c_{m,j}q^{m+3|m|+2j-2}(a\sigma)^{-m} \epsilon^{3|m|+2m+2j-2}Z^m.
\end{align*}
Both shifted series have powers of $\epsilon$ bounded below by $-1$, and only finitely many pairs $(m,j)$ contribute to each fixed power. Substituting $f_n=-u_n/t_n$ in \eqref{eq:qPII} gives
\begin{equation}
 \epsilon^2u_{n+1}u_nu_{n-1}(1-\epsilon^2u_n)=a(u_n-1). \label{eq:scaled-bimodal-equation}
\end{equation}
The $\epsilon^0Z^0$ equation is $a(1+d_1)=a\kappa\sigma^2d_1$, which gives \eqref{eq:leading-zero-mode}.

For every positive harmonic $m\geq2$, only the term $\epsilon^2u_{n+1}u_nu_{n-1}$ contributes at the leading order $\epsilon^m$.  Equating its coefficient to zero gives
\begin{align*}
 0={}&q^{2-2m}(a\sigma)^m d_1^2c_{m,0} +q^{4-2m}(a\sigma)^{m-1}k_1d_1c_{m-1,0}\\
 &+q^{-2m}(a\sigma)^{m+2}k_2d_1c_{m+1,0} +q^{2-2m}(a\sigma)^{m+1}k_1k_2c_{m,0}.
\end{align*}
Therefore, for all $m\geq2$, using $p=q^2$ and $\kappa=ak_1k_2$ yields
\begin{equation}
 k_2(a\sigma)^2c_{m+1,0} +p\left(d_1+\kappa\sigma/d_1\right)c_{m,0} +k_1p^2/(a\sigma)c_{m-1,0}=0. \label{eq:leading-c-recurrence}
\end{equation}
Meanwhile, for $m=1$, the term $au_n$ on the right of \eqref{eq:scaled-bimodal-equation} contributes at the same order, giving
\begin{equation}
 p/(d_1\sigma)c_{1,0} =k_2(a\sigma)^2c_{2,0} +p\left(d_1+\kappa\sigma/d_1\right)c_{1,0}. \label{eq:leading-c-boundary}
\end{equation}

Under the normalisation in the statement, \eqref{eq:leading-c-recurrence} becomes
\begin{equation}
 \hat c_{m-1}+\sigma(d_1+\kappa\sigma/d_1)\hat c_m +\kappa\sigma^3 \hat c_{m+1}=0, \qquad m\geq2. \label{eq:leading-left-boundary}
\end{equation}
The initial data are $\hat c_1=1$ and, by \eqref{eq:leading-c-boundary}, $\hat c_2=-(1+d_1^3\sigma)/(d_1^2\kappa\sigma^3)$. The characteristic polynomial factors as
\[
 1+\sigma(d_1+\kappa\sigma/d_1)\lambda+\kappa\sigma^3\lambda^2 =(1+d_1\sigma\lambda) (1+(\kappa\sigma^2/d_1)\lambda).
\]
Its two roots, $-1/(d_1\sigma)$ and $-d_1/(\kappa\sigma^2)$, are distinct under the stated hypothesis.  Solving for their coefficients using $\hat c_1$ and $\hat c_2$ gives \eqref{eq:leading-tail-decomposition}. Reflection by $\mathcal R$ leaves this recurrence and its initial data unchanged, so the negative harmonics have the same normalised sequence.
\end{proof}

\begin{proposition}[Phase selection]
\label{prop:phase-selection}
Assume that $\sigma\equiv\sigma(\kappa)$ is analytic, or a formal power series, at $\kappa=0$, with $\sigma(0)=1$.  The leading coefficients have regular one-sided limits as either amplitude tends to zero, with the other held fixed (coefficientwise in the formal case), if and only if $d_1^2\sigma=1$, equivalently
\begin{equation}
 \sigma=(\kappa\sigma^2-1)^2. \label{eq:phase-relation}
\end{equation}
On the branch through $\sigma(0)=1$, the leading coefficients in the normalisation above reduce to
\begin{equation}
 \hat c_m=(-d_1)^{m-1},\qquad m\geq1. \label{eq:leading-coefficients}
\end{equation}
The phase is uniquely determined near $\kappa=0$ and depends on the two amplitudes only through $\kappa=ak_1k_2$.
\end{proposition}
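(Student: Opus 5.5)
The plan is to take the explicit formula \eqref{eq:leading-tail-decomposition} from Proposition~\ref{prop:leading-harmonic-coefficients} and examine what regularity of the one-sided limits forces, order by order in $\kappa$.

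\textbf{Hypotheses near $\kappa=0$.} Since $\sigma(0)=1$, Proposition~\ref{prop:leading-harmonic-coefficients} applies for small $\kappa\ne0$:
\begin{itemize}
\item $\kappa\sigma^2\to0\ne1$;
\item $d_1=1/(\kappa\sigma^2-1)$ is analytic (or formal) with $d_1(0)=-1$;
\item $d_1^2-\kappa\sigma\to1\ne0$.
\end{itemize}
Write $A(\kappa):=(d_1^2\sigma-1)/\bigl(\sigma(d_1^2-\kappa\sigma)\bigr)$. This is analytic (respectively formal) at $0$.

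\textbf{Substituting into $c_{m,0}$.} The first summand of \eqref{eq:leading-tail-decomposition} is $A(\kappa)(-d_1/\sigma^2)^{m-1}\kappa^{1-m}$. The second summand is $-\bigl(d_1\sigma(d_1^2-\kappa\sigma)\bigr)^{-1}(-1/(d_1\sigma))^{m-1}$, which is regular at $\kappa=0$. Using $\kappa=ak_1k_2$, the prefactor satisfies
\[
k_1^mp^{m-1}a^{1-m}\kappa^{1-m}=p^{m-1}k_1k_2^{1-m}.
\]
Hence the first summand contributes
\[
p^{m-1}k_1k_2^{1-m}A(\kappa)(-d_1/\sigma^2)^{m-1}
\]
to $c_{m,0}$. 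Here $(-d_1/\sigma^2)^{m-1}\to(1)^{m-1}$ is a unit. The second summand contributes $k_1^m\times(\text{regular in }\kappa)$.

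\textbf{Necessity.} Fix $k_1\ne0$ and let $k_2\to0$. Regularity of $c_{m,0}$ for a given $m$ forces $A(\kappa)=\mathcal O(\kappa^{m-1})$. Requiring this for all $m\ge1$ makes $A$ vanish to infinite order. Hence $A\equiv0$ in the analytic case, and all its coefficients vanish in the formal case. Since the denominator of $A$ is a unit, this gives $d_1^2\sigma=1$. Substituting $d_1=1/(\kappa\sigma^2-1)$ yields exactly \eqref{eq:phase-relation}.

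For the opposite limit $k_1\to0$ with $k_2$ fixed, I would use the formula $c_{-m,0}=k_2^mp^{1-m}a^{1-m}\hat c_m$. The same computation with $k_1\leftrightarrow k_2$ (the reflection $\mathcal R$, under which $\hat c_m$ is invariant) gives the identical condition. So both one-sided limits impose the single condition $d_1^2\sigma=1$.

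\textbf{Sufficiency and \eqref{eq:leading-coefficients}.} If $d_1^2\sigma=1$, the first summand vanishes for every $m$. Since $\hat c_1=1$ identically from \eqref{eq:leading-tail-decomposition}, the second summand's prefactor equals $1$. Also $-1/(d_1\sigma)=-d_1$, so $\hat c_m=(-d_1)^{m-1}$. Then
\[
c_{m,0}=k_1^mp^{m-1}a^{1-m}(-d_1)^{m-1},
\]
which is regular in $(k_1,k_2)$ near the axes because $d_1$ is analytic in $\kappa$. The same holds for $c_{-m,0}$.

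\textbf{Uniqueness of the phase.} Set $G(\kappa,\sigma)=\sigma-(\kappa\sigma^2-1)^2$. Then $G(0,1)=0$ and
\[
\partial_\sigma G=1-4\kappa\sigma(\kappa\sigma^2-1),
\]
which equals $1$ at $(0,1)$. The implicit function theorem gives a unique analytic branch $\sigma(\kappa)$ with $\sigma(0)=1$. In the formal setting, the coefficients are solved recursively by the same nondegeneracy. This branch depends on the amplitudes only through $\kappa$.

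\textbf{Main obstacle.} The step needing most care is the necessity argument. One must make precise that "regular one-sided limit for every $m$" forces infinite-order vanishing of $A$, and that the regular second summand cannot cancel the singular $k_2^{1-m}$ term. For the latter, I would compare powers of $k_2$ at fixed $k_1$: the second summand carries only nonnegative powers.
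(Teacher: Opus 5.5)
Your proposal is correct and follows the paper's route. The first term of \eqref{eq:leading-tail-decomposition} is the only possibly singular one, and regularity for every $m$ forces $d_1^2\sigma-1$ to vanish to infinite order; substituting back and applying the implicit function theorem then give \eqref{eq:leading-coefficients} and uniqueness, with your use of $\hat c_1=1$ and the check $\partial_\sigma G(0,1)=1$ just making explicit what the paper leaves terse. One harmless slip: $k_1^mp^{m-1}a^{1-m}\kappa^{1-m}=p^{m-1}a^{2-2m}k_1k_2^{1-m}$, so you dropped the nonzero constant factor $a^{2-2m}$, which does not affect the argument.
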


\begin{proof}
Since $d_1(0)=-1$ and $\sigma(0)=1$, only the first term in \eqref{eq:leading-tail-decomposition} can be singular at $\kappa=0$. With $k_1\ne0$ fixed, regularity of $c_{m,0}$ requires regularity of $\hat c_m$ for every $m$. This forces $d_1^2\sigma=1$: any finite-order vanishing of its numerator would still produce negative powers of $\kappa$ for sufficiently large $m$. Conversely, substituting this identity into \eqref{eq:leading-tail-decomposition} gives \eqref{eq:leading-coefficients}, which is regular at both endpoints. By \eqref{eq:leading-zero-mode}, the condition is equivalent to \eqref{eq:phase-relation}; the implicit-function theorem gives its unique analytic branch through $(\kappa,\sigma)=(0,1)$, and coefficient comparison gives the unique formal branch.
\end{proof}

At either one-sided endpoint, $\kappa=0$, the selected branch gives $\sigma=1$ and $d_1=-1$.  Equations~\eqref{eq:Pminus}--\eqref{eq:Pplus} and \eqref{eq:bimodal-scales} then give $\epsilon P_n^\pm=t_0^2X_\pm(1+\mathcal O(\epsilon^2))$. Thus, at the level of leading formal harmonics in $f_n=-u_n/t_n$, the endpoints $k_1=0$ and $k_2=0$ recover $(k_2/t_0^2)P_n^-$ and $(k_1/t_0^2)P_n^+$, respectively.  The first is the forward recessive mode used in Theorem~\ref{thm:one-sided-existence}; the second is its reflected counterpart under reversed dominance.

\subsection{Algebraic corrections}

At a fixed algebraic grade $J>0$, extract the geometric factor $c_{m,j}=(-pk_1d_1/a)^mh_{m,j}$. For sufficiently large $m$, the order-$J$ coefficient equations take the form
\[
 d_1h_{m-1}-(1+2d_1)h_m+(1+d_1)h_{m+1}=\mathcal{F}_J,
\]
where $h_m=h_{m,J}$. The forcing $\mathcal F_J$ contains lower-grade coefficients together with low-harmonic and zero-mode data from the current-order equations. As in Proposition~\ref{prop:phase-selection}, analytic or formal dependence on $\kappa$ and regular one-sided limits exclude the singular homogeneous branch, leaving the root $1$. If $\mathcal F_J$ is polynomial in $m$ of degree at most $J-1$, the displayed operator admits a polynomial particular solution of degree at most $J$. This motivates the following conjecture.

\begin{conjecture}[Polynomial-geometric coefficients]
\label{conj:polynomial-corrections}
For every algebraic grade $j\geq0$, there are an integer $M_j\geq1$ and coefficients $\gamma_{k,j}$, $0\le k\le j$, independent of $m$, such that the formal solution selected by regular one-sided coalescence satisfies
\[
 c_{m,j}=\left(-pk_1d_1/a\right)^m \sum_{k=0}^{j}\gamma_{k,j}m^k, \qquad \forall m\geq M_j=j+1.
\]
The negative harmonics are obtained by $c_{-m,j}=\mathcal R(c_{m,j})$.
\end{conjecture}

The leading case $j=0$ follows from Proposition~\ref{prop:phase-selection}; the proposed formulas for $j=1,2$ are verified through finitely many harmonics. The corresponding coefficients are supplied in Appendix~\ref{app:Coefficients}.

\subsection{Geometric sums and formal poles}
\label{subsec:harmonic-summation}

Under the proposed coefficient formulas, summing over harmonics at each algebraic grade gives a rational profile in the exponential mode $Z$. We carry this out at grades $j=0,1,2$ to obtain formal pole-location predictions.

First we set
\[
 x_+=-\frac{pk_1d_1}{a}X_+,\qquad x_-=-\frac{k_2d_1}{ap}X_-=\mathcal R(x_+),
\]
and define the tail profiles by
\[
 H_j=\sum_{m\geq1}c_{m,j}X_+^m,\qquad \mathcal RH_j=\sum_{m\geq1}c_{-m,j}X_-^m.
\]
Thus, we may reorganise \eqref{eq:u-bigraded-series} as
\begin{align}
 u={}&-d_1-d_2\epsilon^2-d_3\epsilon^4+\cdots-(1/\epsilon^2)(1+\mathcal R)\{H_0+H_1\epsilon^2+\epsilon^4H_2+\cdots\}, \label{eq:resummed-grades}
\end{align}
and we compute the following positive-harmonic sums
\begin{align}
 H_0=&\frac{\gamma_{0,0}x_+}{1-x_+},\qquad H_1=c_{1,1}X_+ +\frac{\gamma_{0,1}x_+^2}{1-x_+} +\gamma_{1,1}\frac{x_+^2(2-x_+)}{(1-x_+)^2},\notag\\
 H_2=&c_{1,2}X_++c_{2,2}X_+^2 +\frac{\gamma_{0,2}x_+^3}{1-x_+} +\gamma_{1,2}\frac{x_+^3(3-2x_+)}{(1-x_+)^2}+\gamma_{2,2} \frac{x_+^3(9-11x_++4x_+^2)}{(1-x_+)^3}. \label{eq:resummed-second-grade}
\end{align}
The sums follow from the identity
\[
 \sum_{m\geq1}m^k x^m =\left(x\frac{d}{dx}\right)^k\frac{x}{1-x},\qquad |x|<1,\qquad k\ge0.
\]

Under these coefficient formulas, each displayed harmonic series converges absolutely and uniformly for $|x_\pm|\le\delta<1$. The rational sums continue beyond these discs, with poles at $x_\pm=1$. Since \(x_+x_-=(\kappa d_1^2/a^3)\epsilon^6\), the opposite tail is regular near \(x_\pm=1\) for small \(\epsilon\) and does not contribute to the principal parts used below.

The apparent double and triple pole terms seen in \eqref{eq:resummed-second-grade} are consistent with an expansion about a simple pole whose position depends on $\epsilon$. The following inversion of expansion \eqref{eq:resummed-grades} is the same formal pole-location mechanism used in classical transasymptotic analysis \cite[\S2]{OldeDaalhuis2022}; its reciprocal formulation is described in \cite[\S6.6a]{Costin2009}.

\begin{proposition}[Formal pole locations]
\label{prop:formal-pole-displacement}
Assume the polynomial tails of Conjecture~\ref{conj:polynomial-corrections}, with coefficients \eqref{eq:first-harmonic-correction}--\eqref{eq:second-harmonic-correction} for all $m\ge2$ and $m\ge3$, respectively. Through these two correction grades, the singular terms of the resummed expansion are consistent with a displaced simple pole whose formal location satisfies
\begin{equation}
 x_\pm=1+\alpha_1\epsilon^2+\alpha_2\epsilon^4+\cdots . \label{eq:corrected-pole-equation}
\end{equation}
For the $x_+$ family, the coefficients are
\begin{align}
 \alpha_1 &=-\frac{\gamma_{1,1}}{\gamma_{0,0}} =\frac{pd_1}{a}\gamma_{1,1},\quad\alpha_2=-\frac{\gamma_{1,2}}{\gamma_{0,0}} +\frac{2\gamma_{0,1}\gamma_{1,1} +\gamma_{1,1}^2}{2\gamma_{0,0}^2}. \label{eq:pole-displacements}
\end{align}
Applying $\mathcal R$ to these formulas gives the coefficients for the $x_-$ family. More precisely, the positive-tail part of \eqref{eq:resummed-grades} admits the formal identity, coefficientwise through order $\epsilon^4$,
\begin{equation}
 H_0+\epsilon^2H_1+\epsilon^4H_2+\cdots=\frac{\gamma_{0,0} +\epsilon^2(\gamma_{0,1}-\gamma_{1,1}) +\epsilon^4(\gamma_{0,2}-\gamma_{1,2}+\gamma_{2,2})+\cdots} {1-x_++\alpha_1\epsilon^2+\alpha_2\epsilon^4+\cdots}+B, \label{eq:displaced-pole-profile}
\end{equation}
where the coefficients of $B$ at these orders are holomorphic at $x_+=1$.
\end{proposition}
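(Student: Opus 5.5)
The plan is to treat \eqref{eq:displaced-pole-profile} as an identity between Laurent expansions in the local variable $y:=1-x_+$ about $y=0$, taken coefficientwise in $\epsilon^2$. Everything holomorphic at $y=0$ is absorbed into $B$. The proof then reduces to matching principal parts order by order through $\epsilon^4$.

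\textbf{Step 1 (constants).} First I identify $\gamma_{0,0}$ from Proposition~\ref{prop:phase-selection}. Equation \eqref{eq:leading-coefficients} together with the normalisation in Proposition~\ref{prop:leading-harmonic-coefficients} gives
\[
c_{m,0}=k_1^mp^{m-1}a^{1-m}(-d_1)^{m-1}=\left(-pk_1d_1/a\right)^m\bigl(-a/(pd_1)\bigr).
\]
Hence $\gamma_{0,0}=-a/(pd_1)$, and this yields the second expression for $\alpha_1$ in \eqref{eq:pole-displacements}.

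\textbf{Step 2 (principal parts of the left side).} Next I expand each rational sum in \eqref{eq:resummed-second-grade} at $x_+=1-y$. The finitely many monomial terms $c_{1,1}X_+$, $c_{1,2}X_+$ and $c_{2,2}X_+^2$ are entire in $X_+$, so they go into $B$. The remaining terms expand as follows:
\begin{align*}
\frac{x_+}{1-x_+}&=\frac1y-1, & \frac{x_+^2}{1-x_+}&=\frac1y+\mathcal O(1),\\
\frac{x_+^2(2-x_+)}{(1-x_+)^2}&=\frac1{y^2}-\frac1y+\mathcal O(1), & \frac{x_+^3(3-2x_+)}{(1-x_+)^2}&=\frac1{y^2}-\frac1y+\mathcal O(1).
\end{align*}
For the last term, $x_+^3(9-11x_++4x_+^2)/(1-x_+)^3$, I will record its three Laurent coefficients directly. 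Its triple-pole coefficient is $2$.

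\textbf{Step 3 (expansion of the right side).} I expand the displaced simple pole in $\epsilon^2$:
\[
\frac{N_0+N_1\epsilon^2+N_2\epsilon^4}{y+\alpha_1\epsilon^2+\alpha_2\epsilon^4}
=\frac{N_0}{y}+\epsilon^2\Bigl(\frac{N_1}{y}-\frac{N_0\alpha_1}{y^2}\Bigr)
+\epsilon^4\Bigl(\frac{N_2}{y}-\frac{N_1\alpha_1+N_0\alpha_2}{y^2}+\frac{N_0\alpha_1^2}{y^3}\Bigr)+\cdots.
\]
I then match coefficients as follows.
\begin{itemize}
\item At order $\epsilon^0$, matching gives $N_0=\gamma_{0,0}$.
\item At order $\epsilon^2$, the $y^{-2}$ coefficient gives $\alpha_1=-\gamma_{1,1}/\gamma_{0,0}$, and the $y^{-1}$ coefficient gives $N_1=\gamma_{0,1}-\gamma_{1,1}$.
\item At order $\epsilon^4$, the $y^{-2}$ coefficient determines $\alpha_2$ once the double-pole coefficient of the $\gamma_{2,2}$ term is included. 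I expect this to reproduce \eqref{eq:pole-displacements} after simplifying with $N_1$ and $\alpha_1$. The $y^{-1}$ coefficient fixes $N_2$, which should equal $\gamma_{0,2}-\gamma_{1,2}+\gamma_{2,2}$.
\end{itemize}
The $\mathcal R$-reflected family follows by applying $\mathcal R$ to every step. The opposite tail contributes only to $B$, as remarked before the proposition, because $x_+x_-=\mathcal O(\epsilon^6)$.

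\textbf{Main obstacle.} The overdetermined equation is the one that matters. At order $\epsilon^4$ the $y^{-3}$ coefficient imposes the constraint
\[
2\gamma_{2,2}=\gamma_{0,0}\alpha_1^2=\gamma_{1,1}^2/\gamma_{0,0}.
\]
This is not automatic, and it is exactly what "consistent with a displaced simple pole" means. Likewise, the form of $\alpha_2$ as displayed is only correct if $\gamma_{2,2}$ takes its specific value. I would verify this identity, and the residue identity for $N_2$, by substituting the explicit coefficient formulas \eqref{eq:first-harmonic-correction}--\eqref{eq:second-harmonic-correction} from Appendix~\ref{app:Coefficients}, simplifying with $d_1^2\sigma=1$ and $\sigma=(\kappa\sigma^2-1)^2$. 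If the substitution does not close directly, I would first try rewriting the $\gamma_{2,2}$ term in the basis $1/y^3,1/y^2,1/y$ and checking each relation separately.
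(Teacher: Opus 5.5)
Your proposal is correct and follows the paper's proof essentially step for step. You Laurent-expand the resummed profiles at $x_+=1$, expand the displaced simple pole in $\epsilon^2$, read off $\alpha_1$ and the numerator from the $(1-x_+)^{-2}$ and $(1-x_+)^{-1}$ terms, and use the $\epsilon^4(1-x_+)^{-3}$ compatibility condition $2\gamma_{2,2}=\gamma_{1,1}^2/\gamma_{0,0}$ to put $\alpha_2$ in the displayed form.

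The verification you anticipate is immediate. The appendix formula $\gamma_{2,2}=-\frac{pd_1}{2a}\gamma_{1,1}^2$, together with $\gamma_{0,0}=-a/(pd_1)$, is exactly that condition, so the phase relation is never needed. Likewise, $\gamma_{0,2}-\gamma_{1,2}+\gamma_{2,2}$ is simply read off from the Laurent coefficients $2,-3,1$ of the $\gamma_{2,2}$ term; it is not a separate identity to check.
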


\begin{proof}
Expanding \eqref{eq:resummed-second-grade} at $x_+=1$ gives
\begin{align*}
 H_0&=\frac{\gamma_{0,0}}{1-x_+}+\mathcal O(1),\qquad H_1=\frac{\gamma_{1,1}}{(1-x_+)^2} +\frac{\gamma_{0,1}-\gamma_{1,1}}{1-x_+}+\mathcal O(1),\\
 H_2&=\frac{2\gamma_{2,2}}{(1-x_+)^3} +\frac{\gamma_{1,2}-3\gamma_{2,2}}{(1-x_+)^2} +\frac{\gamma_{0,2}-\gamma_{1,2} +\gamma_{2,2}}{1-x_+}+\mathcal O(1).
\end{align*}
Here, the $\mathcal {O} (1)$ terms are holomorphic at $x_+=1$. To compare with a displaced simple pole, expand its reciprocal denominator formally as
\begin{align*}
 \frac{1}{1-x_++\alpha_1\epsilon^2+\alpha_2\epsilon^4} &=\frac{1}{1-x_+}-\frac{\alpha_1\epsilon^2}{(1-x_+)^2}-\frac{\alpha_2\epsilon^4}{(1-x_+)^2}+\frac{\alpha_1^2\epsilon^4}{(1-x_+)^3}+\cdots.
\end{align*}
The numerator in \eqref{eq:displaced-pole-profile} reproduces the coefficients of $1/(1-x_+)$ at all three grades. Matching the $\epsilon^2/(1-x_+)^2$ term gives $-\gamma_{0,0}\alpha_1=\gamma_{1,1}$. At the next grade, matching $\epsilon^4/(1-x_+)^3$ requires
\begin{equation}
 2\gamma_{2,2} =\gamma_{1,1}^2/\gamma_{0,0}. \label{eq:simple-pole-compatibility}
\end{equation}
This is the displayed formula for $\gamma_{2,2}$, since $\gamma_{0,0}=-a/(pd_1)$.  Finally, the coefficient of $\epsilon^4/(1-x_+)^2$ gives
\[
 -\gamma_{0,0}\alpha_2 -(\gamma_{0,1}-\gamma_{1,1})\alpha_1 =\gamma_{1,2}-3\gamma_{2,2}.
\]
Using \eqref{eq:simple-pole-compatibility} yields $\alpha_2$ in \eqref{eq:pole-displacements}. The zero mode and the opposite tail are regular here and do not change the principal parts. Reflection gives the negative case.
\end{proof}

Equation~\eqref{eq:corrected-pole-equation} is the formal condition for the displaced denominator to vanish. After choosing a phase realisation and setting $\epsilon=1/t$, it gives an implicit equation for candidate pole locations in the $t$-plane. The terms $\alpha_1/t^2$ and $\alpha_2/t^4$ correct the singular phase value $x_\pm=1$; their conversion into displacements in $t$ requires local inversion of $x_\pm(t)$.

\section{One-sided pole arrays}
\label{sec:meromorphic-realisation}

Proposition~\ref{prop:one-sided-poles} establishes a local array of simple poles for a normalised, one-sided ($\kappa=0$) solution under explicit background and domain hypotheses. We fix an $r$-theta function realisation of the exponential factor \eqref{eq:QExpFactor}, and normalise the exponential perturbation relative to the exact $k=0$ quicksilver solution from Theorem~\ref{thm:one-sided-existence}. The corresponding formal limit in Section~\ref{sec:formal-bimodal} is $k_1=0$, so $\kappa=0$ and $\sigma=1$. Section~\ref{subsec:theta-boundary-approach} describes the annular anti-Stokes-like regions under this $r$-theta interpretation, and considers the formal deformation of the same pole array around $\kappa=0$.

For $\sigma\ne0$, let $r=1/q$ and introduce the $r$-theta function \cite[\S11.2]{GasperRahman2004},
\[
 \theta_r(w)=\prod_{j\ge0}(1-r^jw)(1-r^{j+1}/w),\qquad w\in\Cstar.
\]
We realise the formal exponential mode $Z$ by the product
\begin{align}\label{eq:rThetaZ}
 Z(t)=\theta_r(\lambda_1/t)\theta_r(\lambda_2/t),\qquad \lambda_1\lambda_2=r/(a\sigma),\qquad\lambda_1/\lambda_2\notin q^{\mathbb Z}.
\end{align}
This product is holomorphic on $\Cstar$ and, since $\theta_r(rw)=-\theta_r(w)/w$, satisfies
\begin{equation}
 Z(qt)=qa\sigma t^2Z(t),\qquad t\in\mathbb{C}^*. \label{eq:global-phase-equation}
\end{equation}
Substituting $X_+=Z(t)/t^3$ and $X_-=1/(t^3Z(t))$ makes each rational harmonic profile in Section~\ref{subsec:harmonic-summation} meromorphic on $\Cstar$. For the one-sided analytic family constructed below, we write $f(t;k)$ and $u(t;k)=-tf(t;k)$; the parameter $k$ argument is suppressed when its value is not at issue.

\begin{remark}
For fixed $a,q,\sigma$, any other nonzero meromorphic solution $\tilde Z$ of \eqref{eq:global-phase-equation} satisfies $\tilde Z=\phi Z$, where $\phi(qt)=\phi(t)$.  Such meromorphic functions $\phi(t)$ are called $q$-elliptic and form the constant field associated with the first-order $q$-difference equation \eqref{eq:global-phase-equation} \cite{Sauloy2006}. Inclusion of an arbitrary $\phi(t)$ such that $\tilde Z(t)$ remains holomorphic on $\mathbb{C}^*$ is equivalent to the freedom of choosing our amplitude $k$ along with the choice of $\lambda_1$. We leave $\lambda_1\in\mathbb{C}^*$ as a fixed, arbitrary constant, although we assume the generic case $\lambda_1/\lambda_2\notin q^{\mathbb Z}$.
\end{remark}

\subsection{Main result and proof outline}
\label{subsec:one-sided-pole-result}

Choose $\tau_0\in\lambda_1q^{\mathbb Z}\cup\lambda_2q^{\mathbb Z}$ and let $\tau_n=q^n\tau_0$, so each $\tau_n$ is a simple zero of $Z$. In the one-sided specialisation $\sigma=1$, $k_1=0$, $k_2=k$, write $x(t)=k/(apt^3Z(t))$, as prescribed by $x_-$ in Section~\ref{subsec:harmonic-summation}. We study neighbourhoods approaching $\tau_n$ as $n\to\infty$ on which $x(t)$ remains near $1$; introducing the local scale
\begin{equation}
 \omega_n=k/(ap\tau_n^3Z'(\tau_n)), \label{eq:one-sided-phase}
\end{equation}
this being the residue of $x$ at $\tau_n$:
\begin{align*}
 x(t)=\frac{\omega_n}{t-\tau_n}+\dots,\qquad t\to\tau_n.
\end{align*}

\begin{proposition}[One-sided pole array]
\label{prop:one-sided-poles}
Fix $a\ne0$, $|q|>1$, $k\ne0$, and an $r$-theta phase $Z$ according to \eqref{eq:rThetaZ} with $\sigma=1$. Let $U\Subset\Cstar$ be a sufficiently small base neighbourhood of $\tau_0$ as in Section~\ref{sec:quicksilver}. Fix the reference solution $f(s;0)$ supplied by Theorem~\ref{thm:one-sided-existence}, holomorphic on $\Omega_N$ and satisfying
\begin{equation}
 u(s;0)=1+1/(as^2)+\mathcal O(1/s^{4}), \label{eq:exact-one-sided-background}
\end{equation}
uniformly as $|s|\to\infty$ in $\Omega_N$. Fix $0<\delta<1/4$ and let $\mathcal D=\{\xi\in\mathbb C:|\xi-1|<\delta\}$.

Then, after increasing $N$, there is a holomorphic function $f(s;k)$ on
\[
 \{s\in\Omega_N:Z(s)\ne0,\ |k/(sZ(s))|<2\},
\]
which satisfies \eqref{eq:qPII} as a meromorphic identity wherever all three arguments $s/q,s,qs$ belong to this domain, with normalisation uniform in this domain:
\begin{equation}
 f(s;k)-f(s;0)=\frac{k}{s^2Z(s)} \{1+\mathcal O(1/s^{2})\}\quad\text{as}\quad|s|\to\infty. \label{eq:theta-family-normalisation}
\end{equation}

For all sufficiently large $n$, $x$ has a holomorphic inverse branch $\psi_n$ defined on a neighbourhood of $\overline{\mathcal D}$, with $x(\psi_n(\xi))=\xi$ and $(\psi_n(\xi)-\tau_n)/\omega_n\to 1/\xi$ as $n\to+\infty$ uniformly on $\overline{\mathcal D}$. The solution $f(s;k)$ continues meromorphically backwards by \eqref{eq:qPII} to $\psi_n(\mathcal D)$, where it has exactly one pole $t_n^\ast$. This pole is simple, and as $n\to\infty$ its phase satisfies
\begin{equation}
 x(t_n^\ast) =1+\alpha_1/(t_n^\ast)^2+\mathcal O(1/\tau_n^{4}),\qquad \alpha_1=(p^2+2-ap)/(ap^2(p-1)). \label{eq:true-first-pole-displacement}
\end{equation}
Equivalently, its location in the $t$-plane is
\begin{equation}
 t_n^\ast=\tau_n+\omega_n \{1-\alpha_1/\tau_n^2+\mathcal O(1/\tau_n^{4})\}. \label{eq:true-pole-theta-location}
\end{equation}
\end{proposition}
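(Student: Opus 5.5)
The plan is to realise $f(s;k)$ as the one-sided family of Theorem~\ref{thm:one-sided-existence}, with the recessive mode replaced by the $r$-theta mode $1/Z$. Then I continue it backwards by exactly two applications of \eqref{eq:qPII}, and locate the pole by a Rouché argument in the variable $\xi=x(t)$.

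\textbf{Step 1 (rebuilding the recessive mode from $Z$).} On $q^nU$ the sequence $s\mapsto1/(s^2Z(s))$ satisfies $\frac{1/(q^2s^2Z(qs))}{1/(s^2Z(s))}=1/(q^3as^4)$, by \eqref{eq:global-phase-equation} with $\sigma=1$. This is the leading ratio of $P^-$ in \eqref{eq:Pminus}. Running the ratio argument of Lemma~\ref{lem:linear-dichotomy} once more, with the explicit expansions
\[
 \beta_n=-(a/q)t_n^2+b_0+\mathcal O(t_n^{-2}),\qquad \gamma_n=q^{-2}+\mathcal O(t_n^{-2})
\]
obtained from \eqref{eq:exact-one-sided-background}, I expect a recessive solution of the form
\[
 \tilde P(s)=\frac{1}{s^2Z(s)}\Bigl\{1+\frac{b}{s^2}+\mathcal O(s^{-4})\Bigr\},
\]
with an explicit constant $b$. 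The solution is normalised by a holomorphic factor on $U$; since $U$ is a single dilate, this factor is an orbitwise constant. Here the linearisation is taken about the exact solution $f(\cdot;0)$ rather than about $D$; its residual vanishes identically. Using $\tilde P$ in place of $P^-$ in \eqref{eq:one-sided-fixed-point} gives $f(s;k)$ with $f(s;k)-f(s;0)=k\tilde P+\mathcal K\mathcal N(\cdot)$.

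The nonlinear part is $\mathcal O(|t|^3|k\tilde P|^2)$, which is smaller than $k\tilde P$ by the factor $|k/(sZ)|\cdot\mathcal O(s^{-2})$. It is therefore negligible while $|k/(sZ)|$ is small, and this yields \eqref{eq:theta-family-normalisation} on the part of the domain where $|k/(sZ)|\le\varepsilon$.

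\textbf{Step 2 (final layers and continuation).} For $t$ near $\tau_n$, set $s=qt$ and compute $k/(sZ(s))=k/(apt^3Z(t))=x(t)$. Likewise $k/(q^2tZ(q^2t))=\mathcal O(x(t)/t^2)$. So the layer $qt$ has relative perturbation of order one and lies in the domain $|k/(sZ)|<2$ exactly when $|x(t)|<2$, whereas the layer $q^2t$ is in the small regime. On the layer $s=qt$ I will define $f$ by one backward step of \eqref{eq:qPII} from $f(q^2s)$ and $f(q^3s)$, both of which are controlled by Step 1.

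The two needed values expand as follows. First,
\[
 f(qt)=-\frac{1}{qt}\Bigl(1+\frac{1}{ap t^2}\Bigr)+\frac{x(t)}{qt}\Bigl(1+\frac{b}{p t^2}\Bigr)+\mathcal O\bigl(t^{-5}\bigr)
\]
uniformly for $\xi\in\overline{\mathcal D}$. The $x^2/t^3$-type term from the last nonlinear step must be computed explicitly here, because it enters $\alpha_1$. Second, $f(q^2t)=-1/(q^2t)\,(1+\mathcal O(t^{-2}))$.

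Then
\[
 f(t)=\frac{a\bigl(1+qtf(qt)\bigr)}{f(qt)\bigl(f(qt)+qt\bigr)f(q^2t)}.
\]
Here $f(qt)+qt\sim qt$ and $f(q^2t)\sim-1/(q^2t)$ are nonvanishing. The numerator equals $a\,x\,(1+\mathcal O(t^{-2}))$, which is nonzero near $x=1$. Hence the poles of $f$ in $\psi_n(\mathcal D)$ are exactly the zeros of $f(qt)$.

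\textbf{Step 3 (inverse branch and pole location).} Near the simple zero $\tau_n$ of $Z$, $x(t)=\omega_n/(t-\tau_n)+h_n(t)$ with $h_n$ holomorphic. Moreover $\omega_n\to0$ super-polynomially relative to $\tau_n$, because $Z'(\tau_n)$ grows like a theta function. Hence $x$ is univalent on the relevant small disc around $\tau_n+\omega_n$. Its inverse satisfies $\psi_n(\xi)=\tau_n+\omega_n/\xi+o(\omega_n)$, and $\psi_n$ is uniformly defined on a neighbourhood of $\overline{\mathcal D}$.

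In the coordinate $\xi$, the expansion above reads $qt\,f(qt)=\xi-1-\alpha_1/t^2+\mathcal O(t^{-4})$. Here $\alpha_1$ is assembled from $1/(ap)$, from $b/p$, and from the explicit quadratic correction. Since $\xi-1$ has one simple zero in $\mathcal D$ and the perturbation is uniformly $\mathcal O(\tau_n^{-2})$, Rouché's theorem gives exactly one simple zero of $f(qt)$ in $\psi_n(\mathcal D)$. This gives one simple pole $t_n^\ast$ of $f$, with $x(t_n^\ast)=1+\alpha_1/(t_n^\ast)^2+\mathcal O(\tau_n^{-4})$. Substituting into $\psi_n$ yields \eqref{eq:true-pole-theta-location}.

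\textbf{Main obstacle.} The hardest part is the uniform second-order control: obtaining the exact constant $b$, together with the first nonlinear correction in the order-one layer, with an $\mathcal O(s^{-4})$ error uniform up to $|k/(sZ)|<2$. The check is that these combine to $\alpha_1=(p^2+2-ap)/(ap^2(p-1))$, whose factor $p-1$ should come from the summation defining $b$.

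I would first try to write $f(s;k)$ in the layer $qt$ via a second-order transseries ansatz,
\[
 u=U_0(x)+s^{-2}U_1(x)+\mathcal O(s^{-4}),
\]
with $U_0,U_1$ the rational profiles of \eqref{eq:resummed-grades} at $\kappa=0$. I would then bound the remainder by a contraction in the weighted norm of Lemma~\ref{lem:summation-bound}.
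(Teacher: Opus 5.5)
Your Steps 2 and 3 match the paper's argument. The backward formula for $f(t)$ is the reciprocal identity \eqref{eq:shifted-reciprocal-identity}, the inverse branch is Lemma~\ref{lem:theta-inverse-coordinates}, and Rouch\'e is applied to $u(q\psi_n(\xi))$ against $1-\xi$. Your accounting for $\alpha_1$ is also right. Backward-stepping from $u(q^2t)=u(q^2t;0)-\xi_1w_1$ with $\xi_1=x(t)/(ap^2t^2)$, $w_1=1+b/(p^2t^2)+\mathcal O(t^{-4})$ and $b=(ap-2p-1)/(ap(p-1))$ gives exactly $u(qt)=1-\xi+A(\xi)/t^2+\mathcal O(t^{-4})$, with $A$ as in \eqref{eq:actual-first-correction}; the $\xi^2$ term comes from the factor $1/u(q^2t)$.

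The genuine gap is Step 1, which is where the real analysis lies (the paper's Lemma~\ref{lem:uniform-one-sided}). Feeding $\tilde P$ into \eqref{eq:one-sided-fixed-point}, or using the $X_r$ contraction of your last paragraph, does not produce the required function, for three reasons.

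(i) The orbits meeting $\psi_n(\mathcal D)$ have base points with $|t_0-\tau_0|=\mathcal O(|\omega_n||q|^{-n})$. So the orbitwise multiplier of $P^-$, of order $k/(t_0^2Z(t_0))$, is unbounded on $U$. The first index from which the perturbation lies in the $X_4$ ball therefore tends to infinity as $t_0\to\tau_0$. No single $N_\ast$ works, and Theorem~\ref{thm:one-sided-existence} loses its uniformity exactly where the poles are.

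(ii) The inner sum of $\mathcal K$ starts at $N+1$, so the multiplier in \eqref{eq:one-sided-fixed-point} fixes the amplitude at the starting index (see the Remark after Theorem~\ref{thm:one-sided-existence}). Along the orbit, $(f(s;k)-f(s;0))/(k\tilde P(s))\to1+c_N$, where $c_N$ has the size of the relative nonlinear correction at the starting layer and is generically nonzero. Hence \eqref{eq:theta-family-normalisation} fails as $|s|\to\infty$. Also, solutions built from different, base-point-dependent starting indices disagree, so they do not patch into one holomorphic $f(s;k)$.

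(iii) The $X_r$ norms only see algebraic decay. The $X_4$ ball means relative perturbation $\mathcal O(|s|^{-3})$, which excludes the layer $q^2t$ (relative size $\asymp|x|/|t|^2$) that you feed into the backward step. Nor can these norms express what you need there: that the nonlinear correction is $\mathcal O(|x|\,|s|^{-2})$ relative to $k\tilde P$.

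The missing idea is to normalise at infinity, in a coordinate weighted by the mode itself, and to construct from every starting point $s$ rather than orbitwise from a fixed index. The paper writes $u_j=u(s_j;0)-\xi_jw_j$ along $s_j=q^js$, with exact phase iterates $\xi_j=\xi_{j-1}/(as_j^2)$. It solves $w_n=1-\sum_{j>n}\mathcal J_j(w)$ by contraction in $\ell^\infty$; the condition $w_j\to1$ fixes the amplitude intrinsically. Because \eqref{eq:normalised-u-equation} is linear in the backward value, $\xi$ may be of order one at $j=0$, which absorbs your first backward step. Uniqueness gives shift compatibility and a single holomorphic $\mathcal U(t,\xi)$, uniform for $|\xi|<2\mu$. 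The first Picard iterate then yields $A(\xi)$ without computing $\tilde P$ separately. If you replace Step 1 by such a sum-from-infinity construction in a $\tilde P$-weighted norm, the rest of your argument goes through.

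There are also some minor slips:
\begin{enumerate}
\item The ratio in Step 1 is $1/(q^3as^2)$, not $1/(q^3as^4)$.
\item Reaching the layer $qt$ in one backward step uses $f(q^2t)$ and $f(q^3t)$.
\item Determining $b$ needs $\gamma_n$ to $\mathcal O(t_n^{-4})$.
\item The correction in Step 3 is $A(\xi)/t^2$ rather than a constant, evaluated via $A(\xi_n^\ast)=A(1)+\mathcal O(\tau_n^{-2})$.
\item The error $o(\omega_n)$ in $\psi_n$ must be sharpened to $\mathcal O(n\omega_n^2/\tau_n)$ to obtain the error term in \eqref{eq:true-pole-theta-location}.
\end{enumerate}
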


The formal expansion in Section~\ref{subsec:harmonic-summation} gives the leading one-sided profile
\begin{equation}
 \mathcal RH_0=apx/(1-x),\qquad u(t)=1-ap\,t^2x/(1-x)+\cdots , \label{eq:one-sided-leading-profile}
\end{equation}
predicting the singular phase $x=1$. To establish these predictions for the normalised solution, we need uniform estimates on the sequential neighbourhoods where $x$ is close to $1$. The proof has three steps.

\begin{enumerate}
\item Obtain a uniform approximation to $u(qt;k)$, including its first algebraic correction, for bounded $x(t)$. This uses the exact background from Theorem~\ref{thm:one-sided-existence} and is established in Lemma~\ref{lem:uniform-one-sided}, proved in Appendix~\ref{app:one-sided-estimates}.

\item Construct local inverse branches of $x$ on $\mathcal D$ to identify the corresponding neighbourhoods near the $r$-theta zeros $\tau_n$. Lemma~\ref{lem:theta-inverse-coordinates} gives the scale $\omega_n$ needed to translate phase estimates into $t$-plane locations.

\item Use the uniform approximation and Rouch\'e's theorem to locate one simple zero of $u(qt;k)$ in each neighbourhood. The recurrence converts this zero into a simple pole of $u(t;k)$ (Lemma~\ref{lem:forward-pole-criterion}). The first correction and the local inverse then determine the pole's location.
\end{enumerate}
This implements the same one-sided transasymptotic strategy as the classical Painlev\'e pole calculations of Costin and Costin \cite[\S2.3 and \S\S5.2--5.3]{CostinCostin2001}: nonnegative harmonics of one recessive exponential are summed into coefficient profiles, and uniform asymptotic control relates their singularities to poles of true solutions. See also \cite{OldeDaalhuis2022}.  Here, the uniform $q$-difference estimate and the exact recurrence provide the passage from profiles to poles. The formal bimodal expansion motivates the result and supplies a consistency check; the analytic proof derives the required coefficients from \eqref{eq:qPII} and \eqref{eq:exact-one-sided-background}, without assuming the conjectured all-harmonic formulas of Section \ref{sec:formal-bimodal}.

\subsection{Proof of Proposition \ref{prop:one-sided-poles}}
\label{subsec:one-sided-profile}
\label{subsec:pole-transfer}

We use Rouch\'e's theorem in the following form: if $h_1,h_2$ are holomorphic near a closed disc and $|h_1-h_2|<|h_2|$ on its boundary, then they have the same number of zeros inside, counted with multiplicity. In particular, comparison with a function having one simple zero proves both existence and simplicity of the perturbed zero.

For $u(s)=-sf(s)$, the transformed equation \eqref{eq:scaled-bimodal-equation} reads
\begin{equation}
 as^4(u(s)-1)=u(qs)u(s/q)u(s)(s^2-u(s)), \label{eq:normalised-u-equation}
\end{equation}
The phase coordinate $x$ satisfies, by \eqref{eq:global-phase-equation},
\begin{equation}
 x(qt)=x(t)/(ap^2t^2)\qquad\text{where}\qquad x(s/q)=k/(sZ(s)). \label{eq:normalised-recessive-amplitude}
\end{equation}
For uniform estimates, we use an independent variable $\xi$ in place of $x(t)$. The function $\mathcal U(t,\xi)$ below represents $u(qt;k)$ with variable $\xi=x(t)$ rather than $k$.

\begin{lemma}[Uniform one-sided approximation]
\label{lem:uniform-one-sided}
Use the background and domains of Proposition~\ref{prop:one-sided-poles}. For every fixed real $\mu>0$, after increasing $N$, there is a holomorphic function $\mathcal U(t,\xi)$ on $\Omega_N/q\times\{|\xi|<2\mu\}$, with $\mathcal U(t,0)=u(qt;0)$ and, uniformly for $qt\in\Omega_N$, $|\xi|\le \mu$,
\begin{equation}
 \mathcal U(t,\xi) =1-\xi+A(\xi)/t^2+\mathcal O(1/t^{4}), \label{eq:uniform-amplitude-expansion}
\end{equation}
where
\begin{equation}
 A(\xi)=\frac1{ap}-\frac{\xi^2}{ap^2} -\frac{ap-2p-1}{ap^2(p-1)}\xi. \label{eq:actual-first-correction}
\end{equation}
For $qt\in\Omega_N$ and $|\xi|<2\mu$, let $s_j=q^{j+1}t$ and $\xi_j=\xi a^{-j}t^{-2j}q^{-j(j+3)}$.  Then $u_j=\mathcal U(s_j/q,\xi_j)$ solves \eqref{eq:normalised-u-equation} for $j\ge1$, with the normalisation
\[
 u_j-u(s_j;0)=-\xi_j\{1+\mathcal O(1/s_j^{2})\}.
\]
The constructions agree under forward shifts and on common domains for different $\mu$. Consequently $u(s;k)=\mathcal U(s/q,x(s/q))$ and $f(s;k)=-u(s;k)/s$ are holomorphic where $s\in\Omega_N$, $Z(s)\ne0$ and $|x(s/q)|<\mu$.  The function $f(s;k)$ satisfies \eqref{eq:qPII} as a meromorphic identity wherever the three shifted arguments belong to this domain. On a sufficiently late tail of each fixed orbit avoiding the zeros of $Z$, $f(s;k)$ satisfies the summation equation \eqref{eq:one-sided-fixed-point} for a constant chosen on that orbit, and hence belongs to the family of Theorem~\ref{thm:one-sided-existence}.
\end{lemma}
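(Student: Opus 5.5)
The plan is to build $\mathcal U$ by running the contraction of Theorem~\ref{thm:one-sided-existence} only on the late part of each orbit, where the exponential amplitude is genuinely small. The two remaining values, where $\xi$ is of order one, are then recovered by iterating \eqref{eq:normalised-u-equation} backwards a fixed, finite number of times. The reason this works is that the amplitude collapses quickly along the orbit. From $\xi_j=\xi a^{-j}t^{-2j}q^{-j(j+3)}$ we get $\xi_1=\mathcal O(1/t^2)$ and $\xi_2=\mathcal O(1/t^6)$, with faster-than-algebraic decay afterwards, uniformly for $|\xi|<2\mu$.

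\textbf{Step 1: the late tail.} For $j\ge2$, write
\[
u_j=u(s_j;0)+s_jv_j ,
\]
where $v$ solves a summation equation of the form \eqref{eq:one-sided-fixed-point}, posed about the exact background $f(\cdot\,;0)$ in place of $D$. The forcing is the recessive mode, normalised so that its leading term is $-\xi_j/s_j$; this normalisation uses Lemma~\ref{lem:linear-dichotomy} together with $\epsilon P^-=t_0^2X_-(1+\mathcal O(\epsilon^2))$.

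Since $|\xi_j/s_j|\ll|s_j|^{-4}$ for $j\ge2$, the forcing lies in the small ball of $X_4$. The contraction estimates of Lemmas~\ref{lem:nonlinear-estimates} and \ref{lem:summation-bound} therefore apply after increasing $N$, uniformly in $|\xi|<2\mu$, and in fact with a better residual, because the background is now exact. Holomorphy of the Picard iterates in $(t,\xi)$ gives holomorphy of each $u_j$. The same bounds give
\[
u_j-u(s_j;0)=-\xi_j\{1+\mathcal O(1/s_j^2)\}.
\]
For $j\ge3$ this follows directly from the flatness argument in the proof of Theorem~\ref{thm:one-sided-existence}, which bounds the quadratic nonlinearity by $|t|^3|v|^2$.

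\textbf{Step 2: backward extension.} Next define $u_1$ and $u_0$ by solving \eqref{eq:normalised-u-equation} for the backward value:
\[
u(s/q)=\frac{as^4\bigl(u(s)-1\bigr)}{u(qs)\,u(s)\,\bigl(s^2-u(s)\bigr)} .
\]
The denominators are nonzero because $u_j=1+\mathcal O(1/s_j^2)$ for $j\ge1$. The numerator is small: $u-1=\mathcal O(s^{-2})$, so $as^4(u-1)$ is of size $\mathcal O(s^2)$ and, divided by $s^2$, gives an $\mathcal O(1)$ result. The output is therefore holomorphic. This step also yields the normalisation for $j=1,2$ and defines $\mathcal U(t,\xi):=u_0$ on $\Omega_N/q\times\{|\xi|<2\mu\}$. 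The equation then holds for all $j\ge1$ by construction, and setting $\xi=0$ reproduces the reference solution, so $\mathcal U(t,0)=u(qt;0)$.

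\textbf{Step 3: the expansion, which is the main obstacle.} The key computation is to extract \eqref{eq:uniform-amplitude-expansion}--\eqref{eq:actual-first-correction}. For this I will insert the expansions
\[
u_2=1+\frac{1}{as_2^2}+\mathcal O(s^{-4}),\qquad
u_1=1+\frac{1}{as_1^2}-\xi_1+\mathcal O(s^{-4}),
\]
using \eqref{eq:exact-one-sided-background}, and note that $\xi_1$ is itself of size $\xi/(aq^4t^2)$. The difficulty is that $u_1-1$ is $\mathcal O(t^{-2})$ and is multiplied by $s^4$, so the $\mathcal O(t^2)$ leading part cancels against $s^2-u$. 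Obtaining $A(\xi)$ therefore needs every term of $u_1,u_2$ correct through relative order $t^{-4}$, including the $1/s^4$ background coefficient and the $\mathcal O(\xi_1/s_1^2)$ correction of the recessive mode. That correction is not fixed by Step 1 alone. I would first determine it by one more order of the linearised equation $\mathcal AP=0$ about the exact background, and only then carry out the two backward steps symbolically.

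A quadratic $\xi^2$ term, coming from the product $\xi_1\cdot(\text{the }s^2\text{ factor})$, should produce $-\xi^2/(ap^2)$. The linear coefficient $-(ap-2p-1)/(ap^2(p-1))$ should arise from the geometric-sum correction of $P^-$, which explains the factor $p-1$. Uniformity for $|\xi|\le\mu$ then follows from the Cauchy estimate in $\xi$ on $|\xi|<2\mu$.

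\textbf{Step 4: consistency.} Agreement under forward shifts, and across different $\mu$, follows from uniqueness of the fixed point on overlapping tails. On a fixed orbit avoiding the zeros of $Z$, the late values solve \eqref{eq:one-sided-fixed-point} with the constant read off by the formula in the Remark following Theorem~\ref{thm:one-sided-existence}, so the resulting $f$ belongs to the family of that theorem.
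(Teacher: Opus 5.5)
Your route differs from the paper's: you run the contraction of Section~\ref{sec:quicksilver} on the tail $j\ge2$ and then take two explicit backward steps. As written it has a genuine gap in Step~4, which is exactly the step the lemma needs to conclude that $u(s;k)=\mathcal U(s/q,x(s/q))$ solves \eqref{eq:normalised-u-equation}.

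The operator $\mathcal K$ in \eqref{eq:nested-summation-operator} has a finite lower limit $l=N+1$. By the Remark after Theorem~\ref{thm:one-sided-existence}, the coefficient of $P^-$ in \eqref{eq:one-sided-fixed-point} is therefore the Wronskian amplitude read off \emph{at the starting index}. In your construction that index must move with $t$, since it is the index of $s_2$. On the dilate with $qt=q^nt_0$, the coefficient matching $-\xi_j/s_j$ is $-\xi a^nt_0^{2n-1}q^{n^2+n}$, which is superexponentially large in $n$, so no common start on an orbit is possible. Consequently the constructions at $(t,\xi)$ and at $(qt,\xi_1)$ carry the same coefficient but start one index apart.

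Write $\mathcal K_N$ for \eqref{eq:nested-summation-operator} with starting index $N$, and $D$ for the background in use. On indices $\ge N+1$,
\[
 \mathcal K_Nh-\mathcal K_{N+1}h=-\frac{P^+_{N+1}h_{N+1}}{at_0^4D_{N+2}D_{N+1}}\,P^- .
\]
So the tail built at $(t,\xi)$ is the fixed point of the later-start problem with a \emph{shifted} coefficient. It agrees with the tail built at $(qt,\xi_1)$ only if $\mathcal N_{N+1}(v_{N+1},v_N)=0$. That fails generically, because the quadratic remainder does not vanish on the recessive family. In the paper's variables this nonlinearity is the term $-\xi_jw_{j-1}w_j$ inside $\mathcal J_j$, which is what produces the $\xi^2$ term of $A$.

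Hence $\mathcal U(qt,\xi_1)\neq u_1$ in general, and the values $\mathcal U(s_j/q,\xi_j)$ do not form a solution. ``Uniqueness on overlapping tails'' does not help, because the two fixed-point problems are different. For the same reason the Remark's formula only identifies the orbit constant once $v$ is known to be a fixed point at that orbit's own starting index. The paper obtains this from flatness of $v-\mathcal K\mathcal Nv$, which forces it to be a multiple of $P^-$. Relatedly, the algebraic $X_r$ norms cannot give relative accuracy $\{1+\mathcal O(s_j^{-2})\}$ against the $q^{-j^2}$ decay of $\xi_j$ along a whole tail. What is missing is a normalisation of the amplitude at $j=\infty$, together with estimates in a norm weighted by $\xi_j$.

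The paper does exactly this. It writes $u_j=u(s_j;0)-\xi_jw_j$, subtracts the background equation and divides by $-as_j^4\xi_j$. This gives $w_j-w_{j-1}=\mathcal J_j(w)$ with $\mathcal J_j=\mathcal O(s_j^{-2})$ for all $j\ge1$, uniformly for $|\xi|\le2\mu$. The only value not close to $1$ is $u_0$, and it enters only linearly, through $w_0$, so no backward steps are needed. The contraction $w_n=1-\sum_{j>n}\mathcal J_j(w)$ in $\ell^\infty$ is normalised by $w_j\to1$. The problem at $(qt,\xi_1)$ is therefore literally the tail problem for $(w_1,w_2,\dots)$, and uniqueness gives shift compatibility.

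It also reduces your Step~3 to a short computation. $A(\xi)$ comes from the first Picard iterate, using $\mathcal J_j(\mathbf 1)=(2/a+1/(ap)-1)/s_j^2-\xi_j+\mathcal O(s_j^{-4})$ and only the $1/(as^2)$ term of \eqref{eq:exact-one-sided-background}. Your scheme needs the first correction of $P^-$ and, as you set it up, an $s^{-4}$ background coefficient that \eqref{eq:exact-one-sided-background} does not supply. At present it only predicts, rather than derives, the coefficients of $A$. A minor slip: $\xi_2=\mathcal O(t^{-4})$, not $\mathcal O(t^{-6})$, although your $X_4$ smallness claim still holds.
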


The proof is given in Appendix~\ref{app:one-sided-estimates}.

\begin{lemma}[Local inversion of the phase coordinate]
\label{lem:theta-inverse-coordinates}
For all sufficiently large $n$, the map $x(t)$ has a holomorphic one-to-one inverse $t=\psi_n(\xi)$ on a neighbourhood of $\overline{\mathcal D}$. Uniformly there,
\begin{align}
 \psi_n(\xi)&=\tau_n+\omega_n/\xi +\mathcal O\left(n\omega_n^2/\tau_n\right),\quad\psi_n'(\xi)=-\omega_n/\xi^2 +\mathcal O\left(n\omega_n^2/\tau_n\right). \label{eq:theta-inverse-estimates}
\end{align}
In particular $|\psi_n(\xi)|\asymp |\tau_n|$, and $|\omega_n|$ decays faster than every fixed power of $1/|\tau_n|$.
\end{lemma}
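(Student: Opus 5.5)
Near $\tau_n$ we will treat $1/x$ as a local coordinate, invert it by the inverse function theorem on a disc of radius comparable to $|\omega_n|$, and control the error using the $q$-difference structure of $Z$. Write $x(t)=k/(apt^3Z(t))$ and factor $Z(t)=(t-\tau_n)Z'(\tau_n)\,G_n(t)$ with $G_n(\tau_n)=1$. Then
\[
 \frac{1}{x(t)}=\frac{t-\tau_n}{\omega_n}\,\frac{t^3}{\tau_n^3}\,G_n(t)=:\frac{t-\tau_n}{\omega_n}\,g_n(t),\qquad g_n(\tau_n)=1.
\]
For $\xi\in\overline{\mathcal D}$ the target $1/\xi$ lies in a fixed compact set away from $0$. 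Consequently the relevant $t$ satisfy $|t-\tau_n|\asymp|\omega_n|$, and on that scale we only need $g_n(t)=1+\mathcal O(|t-\tau_n|\sup|g_n'|)$.

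\textbf{Step 1: size of $\omega_n$.} Differentiate \eqref{eq:global-phase-equation} at a zero. Since $Z(\tau_n)=0$, this gives $qZ'(q\tau_n)=qa\tau_n^2Z'(\tau_n)$, so $Z'(\tau_{n+1})=a\tau_n^2Z'(\tau_n)$. Iterating yields $|Z'(\tau_n)|\asymp |a|^n|\tau_0|^{2n}|q|^{n^2-n}|Z'(\tau_0)|$; here $Z'(\tau_0)\ne0$ because the zero is simple by the genericity $\lambda_1/\lambda_2\notin q^{\mathbb Z}$. It follows that $\omega_n$ decays like $|q|^{-n^2}$ up to geometric factors, which is faster than any power of $1/|\tau_n|\asymp|q|^{-n}$.

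\textbf{Step 2: bound on $g_n'/g_n$.} This is the main obstacle: we need $|t-\tau_n|\le C|\omega_n|$ to imply $g_n(t)=1+\mathcal O(n|\omega_n|/|\tau_n|)$. We plan to compute the logarithmic derivative from the product. Removing the factor $(1-\tau_n/t)$, which carries the zero, each remaining factor of $\theta_r(\lambda_i/t)$ has a zero at some point $\lambda_iq^{m}$. Its contribution to $(\log G_n)'$ near $\tau_n$ is therefore bounded by $1/|\tau_n-\lambda_iq^m|$, up to $\mathcal O(1/|\tau_n|)$ terms coming from the normalisation by $t$.

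We split the zeros into two groups. Those with $|\lambda_iq^m|\ge|\tau_n|$ are well separated on a geometric scale. Each contributes $\mathcal O(|\lambda_i q^m|^{-1})$, and together they form a convergent geometric sum of size $\mathcal O(1/|\tau_n|)$. Those with $|\lambda_iq^m|<|\tau_n|$ contribute terms of the form $r^j w/(1-r^jw)\cdot(1/t)$ with $|r^jw|$ at most of order $1$. There are about $n$ such terms, each $\mathcal O(1/|\tau_n|)$, because the zeros of the other theta factor stay at relative distance bounded below by genericity. This is the source of the factor $n$.

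Altogether $|g_n'|=\mathcal O(n/|\tau_n|)$ uniformly on a disc $|t-\tau_n|\le C|\omega_n|$; the factor $t^3/\tau_n^3$ adds only $\mathcal O(1/|\tau_n|)$.

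\textbf{Step 3: inversion.} Set $t=\tau_n+\omega_n w$ with $|w|\le C$. The equation $x(t)=\xi$ becomes
\[
 w\bigl(1+\mathcal O(n|\omega_n|/|\tau_n|)\bigr)=1/\xi .
\]
Because $n|\omega_n|/|\tau_n|\to0$, Rouché's theorem (or the argument principle) on a circle $|w-1/\xi|=\rho$ gives exactly one solution $w$ for each $\xi$ in a neighbourhood of $\overline{\mathcal D}$, with $w=1/\xi+\mathcal O(n\omega_n/\tau_n)$. Multiplying by $\omega_n$ gives the stated estimate for $\psi_n$.

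Holomorphy of $\psi_n$ follows from the nonvanishing of $x'$ on the disc: there $x'=-\omega_n/(t-\tau_n)^2\,(1+o(1))$, so $x'\ne0$. Injectivity follows by applying the same counting to each target value. The derivative estimate comes from Cauchy's inequality applied to $\psi_n-\tau_n-\omega_n/\xi$ on a slightly larger disc in $\xi$. Finally $|\psi_n(\xi)|\asymp|\tau_n|$, since $|\omega_n|\ll|\tau_n|$.
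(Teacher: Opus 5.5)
Your proposal is correct, but it reaches the key uniform estimate by a different route from the paper.

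The paper never uses the product formula. It pulls $t=\tau_n+\omega_n v$ back to $s=t/q^n=\tau_0+\omega_n v/q^n$ and applies the iterated shift law $Z(q^ns)=(as^2)^nq^{n^2}Z(s)$. Then $Z(\tau_n+\omega_nv)=Z'(\tau_n)\omega_nv\{1+\mathcal O(n\omega_n/\tau_n)\}$ follows from a Taylor expansion of $Z$ at the fixed simple zero $\tau_0$. The factor $n$ comes entirely from $(1+h/\tau_0)^{2n}=1+\mathcal O(nh)$ with $h=\omega_nv/q^n$.

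You instead bound $(\log G_n)'$ by summing over the zeros of the theta product. There the factor $n$ appears as the roughly $2n$ zeros of modulus below $|\tau_n|$, each contributing about $1/t$. This is consistent with the $2n/t$ obtained by logarithmically differentiating the iterated identity. Both arguments give $g_n=1+\mathcal O(n\omega_n/\tau_n)$ on the disc $|t-\tau_n|\le C|\omega_n|$, so both close the lemma.

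The paper's route is shorter and uses only \eqref{eq:global-phase-equation} and simplicity of the zero at $\tau_0$. It therefore applies verbatim to any holomorphic solution of the shift law with a simple zero there. Yours depends on the explicit product. It also needs the genericity $\lambda_1/\lambda_2\notin q^{\mathbb Z}$ to keep the finitely many zeros with $|z|\asymp|\tau_n|$ at distance $\gtrsim|\tau_n|$ from $\tau_n$.

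Your identification of which factors produce the $n$ terms is loose. They come from the factors $(1-r^{j+1}t/\lambda_i)$, whose zeros $\lambda_iq^{j+1}$ have modulus between $|\lambda_i|$ and $|\tau_n|$. They do not come from the factors $(1-r^j\lambda_i/t)$, whose total contribution to the logarithmic derivative is only $\mathcal O(1/\tau_n^{2})$. The count and size you state are nevertheless correct.

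In the inversion step, the paper applies Rouch\'e's theorem to $x$ itself on the fixed disc $|v-1|\le1/2$ for every $|\xi-1|<1/4$. This makes holomorphic dependence on $\xi$ immediate. Your $\xi$-dependent discs for $1/x$ also work, given the local-uniqueness and nonvanishing-derivative argument you sketch. Working with $1/x$ has the minor advantage that it is holomorphic at $\tau_n$.
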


\begin{proof}
Iteration of \eqref{eq:global-phase-equation}, with $\sigma=1$, gives
\begin{equation}
 Z(q^ns)=(as^2)^nq^{n^2}Z(s),\qquad Z'(\tau_n)=(a\tau_0^2)^nq^{n^2-n}Z'(\tau_0). \label{eq:one-sided-theta-orbit}
\end{equation}
The second identity follows by differentiation at $s=\tau_0$ and proves the stated decay of $\omega_n$.

Introduce a complex local coordinate $v$ by $t=\tau_n+\omega_nv$, and restrict it to $|v-1|\le1/2$.  Then $s=t/q^n=\tau_0+\omega_nv/q^n$ lies near $\tau_0$ for large $n$. Taylor expansion in \eqref{eq:one-sided-theta-orbit} gives
\[
 Z(\tau_n+\omega_nv) =Z'(\tau_n)\omega_nv\left\{1+ \mathcal O\left(n\omega_n/\tau_n\right)\right\}
\]
uniformly on this disc. Since $|v|\ge1/2$, the displayed expression is nonzero for large $n$, so $v\mapsto x(\tau_n+\omega_nv)$ is holomorphic near the closed disc. By \eqref{eq:one-sided-phase} and the definition of $\omega_n$,
\[
 x(\tau_n+\omega_nv)=1/v+ \mathcal O\left(n\omega_n/\tau_n\right).
\]

For each $\xi$ with $|\xi-1|<1/4$, we solve $x(\tau_n+\omega_nv)=\xi$ in $|v-1|<1/2$.  On the boundary,
\[
 \left|1/v-\xi\right| \ge|1/v-1|-|\xi-1| \ge1/3-1/4=1/12.
\]
The error above is smaller than $1/12$ for all sufficiently large $n$, independently of $\xi$. Rouch\'e's theorem therefore gives the same zero count as $1/v-\xi$, whose unique zero $1/\xi$ lies in the disc since $|1/\xi-1|<1/3$. Thus there is exactly one zero, counted with multiplicity; it is simple and depends holomorphically on $\xi$ by the implicit-function theorem. At this zero, the displayed expansion gives $v=1/\xi+\mathcal O(n\omega_n/\tau_n)$.

Define $\psi_n(\xi)=\tau_n+\omega_nv$ using this unique zero.  This proves the first estimate in \eqref{eq:theta-inverse-estimates}, uniformly on $|\xi-1|<1/4$.  Moreover, $x(\psi_n(\xi))=\xi$ makes $\psi_n$ one-to-one.  Since $\delta<1/4$, Cauchy's estimate for the holomorphic error on a smaller fixed disc still containing $\overline{\mathcal D}$ gives the derivative estimate. Finally, the first estimate and $\omega_n/\tau_n\to0$ imply $|\psi_n(\xi)|\asymp|\tau_n|$ there.
\end{proof}

\begin{lemma}[Forward estimates and pole criterion]
\label{lem:forward-pole-criterion}
For the family in Lemma~\ref{lem:uniform-one-sided}, let $V(t)=u(qt)$ and $W(t)=u(q^2t)$. For every fixed $\mu>0$, take $N$ sufficiently large in Lemma~\ref{lem:uniform-one-sided}.  Then $V$ and $W$ are holomorphic near every point with $qt\in\Omega_N$, $Z(t)\ne0$ and $|x(t)|\le \mu$, and uniformly on this set as $|t|\to\infty$,
\begin{align}
 V(t)&=1-x(t)+A(x(t))/t^{2}+\mathcal O(1/t^{4}), \label{eq:shifted-first-corrected-estimate}\\
 W(t)&=1+\frac{1-x(t)}{ap^2t^2}+\mathcal O(1/t^{4}), \label{eq:forward-estimate-target}
\end{align}
where $A$ is defined in \eqref{eq:actual-first-correction}. On a neighbourhood where $V$ is not identically zero and $W$, $1-V$ and $1-V/(pt^2)$ are nonzero, the identity
\begin{equation}
 \frac1{u(t)} =-\frac{W(t)V(t)\{1-V(t)/(pt^2)\}}{ap\,t^2\{1-V(t)\}} \label{eq:shifted-reciprocal-identity}
\end{equation}
defines a meromorphic backward continuation. Its poles are precisely the zeros of $V$, with the same multiplicities.
\end{lemma}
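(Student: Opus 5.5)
The plan is to derive both forward estimates directly from Lemma~\ref{lem:uniform-one-sided}, evaluated at the appropriate phase values, and then to solve the normalised equation \eqref{eq:normalised-u-equation}, shifted forward by one step, for $u(t)$. To keep all estimates uniform up to the boundary $|x(t)|=\mu$, I would apply Lemma~\ref{lem:uniform-one-sided} with $\mu$ replaced by $2\mu$. This makes $\mathcal U$ holomorphic on a strictly larger $\xi$-disc, so holomorphy holds in a neighbourhood of every point with $|x(t)|\le\mu$, and the expansion \eqref{eq:uniform-amplitude-expansion} is uniform there.

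\textbf{Step 1: the estimate for $V$.} By Lemma~\ref{lem:uniform-one-sided}, $u(s;k)=\mathcal U(s/q,x(s/q))$ for $s\in\Omega_N$, $Z(s)\ne0$ and $|x(s/q)|<2\mu$. I take $s=qt$, and note that $Z(qt)=qat^2Z(t)$ by \eqref{eq:global-phase-equation}, so $Z(qt)\ne0$ exactly when $Z(t)\ne0$. This gives $V(t)=\mathcal U(t,x(t))$, holomorphic near the stated set. Substituting $\xi=x(t)$ into \eqref{eq:uniform-amplitude-expansion} gives \eqref{eq:shifted-first-corrected-estimate} immediately.

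\textbf{Step 2: the estimate for $W$.} I write $W(t)=\mathcal U(qt,x(qt))$, where by \eqref{eq:normalised-recessive-amplitude}
\[
 x(qt)=\frac{x(t)}{ap^2t^2}=\mathcal O(1/t^2).
\]
In particular $|x(qt)|\le\mu$ for large $N$, so Step 1 applies one step further along the orbit. Since $(qt)^2=pt^2$, the expansion gives
\[
 W=1-\frac{x}{ap^2t^2}+\frac{A(x(qt))}{pt^2}+\mathcal O(1/t^4).
\]
Because $A$ is a quadratic polynomial with $A(0)=1/(ap)$, we have $A(x(qt))=1/(ap)+\mathcal O(1/t^2)$ uniformly. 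Hence $A(x(qt))/(pt^2)=1/(ap^2t^2)+\mathcal O(1/t^4)$, which yields \eqref{eq:forward-estimate-target}.

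\textbf{Step 3: the reciprocal identity and the pole criterion.} I evaluate \eqref{eq:normalised-u-equation} at $s=qt$, which gives
\[
 ap^2t^4\bigl(V-1\bigr)=W\,u(t)\,V\,\bigl(pt^2-V\bigr).
\]
This holds wherever $t/q$, $t$ and $qt$ lie in the domain of Lemma~\ref{lem:uniform-one-sided}. Solving for $1/u(t)$ and factoring out $pt^2$ gives \eqref{eq:shifted-reciprocal-identity}. I would then take the right-hand side of \eqref{eq:shifted-reciprocal-identity} as the definition of $1/u$ on the given neighbourhood. It is holomorphic there, since $W$, $1-V$ and $1-V/(pt^2)$ are nonzero, and it is not identically zero, since $V\not\equiv0$. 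Its reciprocal is therefore meromorphic. By uniqueness of meromorphic continuation it agrees with the existing $u(t;k)$ on any overlap, because the equation is a meromorphic identity there. The zeros of $1/u$ are exactly the zeros of $V$, with the same multiplicities, because the remaining factors are nonvanishing and holomorphic. These zeros are precisely the poles of $u$, with matching orders.

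I expect the only delicate point to be the bookkeeping of domains. One must check that the conditions $qt\in\Omega_N$ and $q^2t\in\Omega_N$ hold, and that the phase condition is inherited under the forward shift. Both follow from $\Omega_N$ being forward-invariant, together with $|x(qt)|\ll|x(t)|$; everything else is direct substitution.
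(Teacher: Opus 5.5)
Your proposal is correct and follows the paper's proof. Both read off the two estimates from $V=\mathcal U(t,x(t))$ and $W=\mathcal U(qt,x(qt))$ with $x(qt)=x(t)/(ap^2t^2)$, then solve \eqref{eq:normalised-u-equation} at $s=qt$ for $1/u(t)$ as $V$ times a nonvanishing holomorphic factor; your use of $A(0)=1/(ap)$ and the enlarged disc $2\mu$ only makes explicit what the paper leaves implicit. One small slip: the equation at $s=qt$ involves $u(t)$, $u(qt)$ and $u(q^2t)$, so the points that must lie in the domain are $t,qt,q^2t$ (the corresponding phase arguments are $t/q,t,qt$), not $t/q,t,qt$.
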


\begin{proof}
Since $V(t)=\mathcal U(t,x(t))$ and $W(t)=\mathcal U(qt,x(qt))$, \eqref{eq:uniform-amplitude-expansion} and $x(qt)=x(t)/(ap^2t^2)$ give the two estimates.  Solving \eqref{eq:normalised-u-equation} at $qt$ for the preceding value gives \eqref{eq:shifted-reciprocal-identity}. Under the stated conditions, its right-hand side is a nonzero holomorphic factor times $V$ and is not identically zero. Its reciprocal therefore defines a meromorphic function $u(t)$. On overlaps with the original domain, this agrees with the constructed solution, which already satisfies the same recurrence by shift compatibility. Since $f(t)=-u(t)/t$ and $t\ne0$, $f$ and $u$ have the same poles with the same multiplicities.
\end{proof}

We proceed to complete the proof of Proposition~\ref{prop:one-sided-poles}.

\begin{proof}[Proof of Proposition~\ref{prop:one-sided-poles}]
Lemma~\ref{lem:uniform-one-sided}, with $\mu=2$, supplies $f$ on the stated domain with normalisation \eqref{eq:theta-family-normalisation}. Let $\psi_n$ be the inverse branches from Lemma~\ref{lem:theta-inverse-coordinates}, and work on a fixed neighbourhood of $\overline{\mathcal D}$ contained in $|\xi-1|<1/4$. Since $\psi_n(\xi)/q^{n}\to\tau_0\in U$ uniformly there, $q\psi_n(\xi)$ and $q^2\psi_n(\xi)$ belong to $\Omega_N$ for all sufficiently large $n$.  Also $Z(\psi_n(\xi))\ne0$ and $|x(\psi_n(\xi))|=|\xi|<2$.  Lemma~\ref{lem:forward-pole-criterion} therefore applies: the functions $V_n(\xi)=V(\psi_n(\xi))$ and $W_n(\xi)=W(\psi_n(\xi))$ are holomorphic on this neighbourhood and satisfy, uniformly there,
\begin{equation}
 V_n(\xi)=1-\xi+A(\xi)/\psi_n(\xi)^{2}+\mathcal O(1/\tau_n^{4}),\quad W_n(\xi)=1+\mathcal O(1/\tau_n^{2}). \label{eq:shifted-uniform-target}
\end{equation}

On $\partial\mathcal D$, the difference $V_n(\xi)-(1-\xi)$ is $\mathcal O(1/\tau_n^{2})$, hence smaller than $|1-\xi|=\delta$ for large $n$.  Rouch\'e's theorem gives exactly one zero of $V_n$ in $\mathcal D$, counted with multiplicity. Denote this simple zero by $\xi_n^\ast$ and set $t_n^\ast=\psi_n(\xi_n^\ast)$. Meanwhile $W_n$, $1-V_n$ and $1-V_n/(p\psi_n^2)$ tend uniformly to $1$, $\xi$ and $1$, respectively, and hence do not vanish on $\overline{\mathcal D}$, since $|\xi|\ge1-\delta>0$ there.  The pole criterion in Lemma~\ref{lem:forward-pole-criterion} now shows that one backward step defines a meromorphic continuation with precisely one simple pole in $\psi_n(\mathcal D)$, at $t_n^\ast$.

To locate this pole more precisely, first use $V_n(\xi_n^\ast)=0$ in \eqref{eq:shifted-uniform-target} to obtain $\xi_n^\ast=1+\mathcal O(1/\tau_n^{2})$. Since $A$ is a fixed polynomial, this implies $A(\xi_n^\ast)=A(1)+\mathcal O(1/\tau_n^{2})$. Substituting back into the same equation gives
\[
 \xi_n^\ast=1+A(1)/(t_n^\ast)^{2}+\mathcal O(1/\tau_n^{4}).
\]
As $\xi_n^\ast=x(t_n^\ast)$ and $A(1)=\alpha_1$ by \eqref{eq:actual-first-correction}, this proves \eqref{eq:true-first-pole-displacement}.

Finally, \eqref{eq:theta-inverse-estimates} gives $t_n^\ast-\tau_n=\mathcal O(\omega_n)$, and therefore $1/(t_n^\ast)^2=1/\tau_n^2+\mathcal O\left(\omega_n/\tau_n^3\right)$. Insert this and $1/\xi_n^\ast=1-\alpha_1/(t_n^\ast)^2+\mathcal O(1/\tau_n^{4})$ into the first estimate of \eqref{eq:theta-inverse-estimates}. The decay of $\omega_n$ from Lemma~\ref{lem:theta-inverse-coordinates} absorbs both the inverse-coordinate error and the substitution error into $\mathcal O(\omega_n/\tau_n^{4})$, therefore we obtain \eqref{eq:true-pole-theta-location}.
\end{proof}

\subsection{Annular anti-Stokes-like regions}
\label{subsec:theta-boundary-approach}

We consider the implication of the meromorphic $r$-theta realisation \eqref{eq:rThetaZ} in the broader context of the expansions of Section \ref{sec:formal-bimodal}. Let $k_1k_2\ne0$ and fix the selected $\sigma(\kappa)$. The two leading geometric sums of Section~\ref{subsec:harmonic-summation} converge simultaneously in
\begin{equation}
 |x_-|<1,\quad |x_+|<1 \quad\Longleftrightarrow\quad |k_2d_1|/|apt^3|<|Z(t)| <|at^3|/|pk_1d_1|. \label{eq:theta-harmonic-band}
\end{equation}
Throughout this region, $\log|Z(t)|=\mathcal O(\log|t|)$, so the two modes $Z(t)$ and $1/Z(t)$ (alternatively $X_+,X_-$) differ by at most algebraic factors in $|t|$.  We therefore interpret such a local annular component as an \emph{anti-Stokes transition region}. Edges given by $|x_-|=1$ and $|x_+|=1$ are the convergence boundaries of the respective harmonic sums, given by algebraically weighted level curves of the product of $r$-theta functions defining $Z$.

Recall that we assume the zero spirals $\lambda_1q^{\mathbb Z}$ and $\lambda_2q^{\mathbb Z}$ are distinct, so their zeros are simple, and consider the zeros along either spiral at $\tau_n=q^n\tau_0$. The local Taylor expansion used in Lemma~\ref{lem:theta-inverse-coordinates} shows that the inner and outer edges of the proposed annular regions have radii asymptotic to
\begin{equation}
 \frac{|k_2d_1|}{|\tau_n|^3|apZ'(\tau_n)|}, \qquad \frac{|a|\,|\tau_n|^3}{|pk_1d_1Z'(\tau_n)|}, \label{eq:theta-level-radii}
\end{equation}
respectively. Iterating \eqref{eq:global-phase-equation} gives $|Z'(\tau_n)|=|q|^{n^2+\mathcal O(n)}$:  for fixed parameters, as $n\to\infty$, each edge translated by $-\tau_n$ and rescaled by its corresponding radius tends to the unit circle. The annuli shrink in absolute size around centres lying exactly on the two $q$-spirals. The formal pole calculation \eqref{eq:corrected-pole-equation} places the pole families associated with the negative and positive harmonics asymptotically on the inner and outer edges, respectively.

In the one-sided limit $k_1=0$, $k_2=k$, only the convergence boundary $|x|=1$ remains. Proposition~\ref{prop:one-sided-poles} shows that the true poles approach this boundary relative to its shrinking local radius. The analogue of a classical anti-Stokes ray is thus a family of disjoint boundary curves around $r$-theta zeros on $q$-spirals.

\begin{remark}
Introducing the second exponential mode formally perturbs the one-sided pole array. To compare locations as $\kappa$ varies from zero, fix $k_2=k$ and choose an $r$-theta family $Z_\kappa$, with $Z_0=Z$, keeping both the zero $\tau_0$ and the derivative $Z_\kappa'(\tau_0)$ fixed; the latter can be achieved by a scalar normalisation. The $r$-theta shift law and the formal pole condition \eqref{eq:corrected-pole-equation} then give
\[
 t^*_n(\kappa) =\tau_n-d_1(\kappa)\sigma(\kappa)^{-n}\omega_n \left\{1-\alpha_1(\kappa)/\tau_n^2+\cdots\right\},
\]
where $\tau_n,\omega_n$ are the one-sided scales and $\alpha_1(\kappa)$ is the coefficient for the $x_-$ family in \eqref{eq:pole-displacements}. At $\kappa=0$, this agrees through the first correction with \eqref{eq:true-pole-theta-location}.
\end{remark}

\section*{Summary}

We established local one-parameter families of quicksilver solutions with a common algebraic asymptotic expansion. A formal bimodal expansion investigates the interaction of two independent exponential modes. Summing over harmonics yields rational profiles that indicate the locations of simple poles.

For a normalised one-sided family with a fixed $r$-theta phase, we proved the existence of a simple-pole array and obtained its first displacement correction. The poles approach the associated harmonic convergence boundaries, whose curves are organised along $q$-spirals. This gives a $q$-difference analogue of classical anti-Stokes pole geometry.

\printbibliography

\appendix

\section{Calculated Formal Coefficients}
\label{app:Coefficients}

We display further coefficients of the formal expansion \eqref{eq:u-bimodal-series}, according to the proposed form of Conjecture \ref{conj:polynomial-corrections}.

We have already shown $M_0=1$ and $\gamma_{0,0}=-a/(pd_1)$, by \eqref{eq:leading-coefficients}. At grade $j=1$, the proposed tail has $M_1=2$ and coefficients
\begin{equation}
 \gamma_{0,1}=(p+1)d_1,\qquad \gamma_{1,1}=(ap+1+2d_1(p^2+1))/(1-p). \label{eq:first-harmonic-correction}
\end{equation}
At grade $j=2$, the proposed tail has $M_2=3$ and coefficients
\begin{equation}
\begin{aligned}
 \gamma_{0,2} &=\frac{d_1}{a}\{pa+2d_1(p^3a+p^2+pa+1) +3d_1^2(p^4+p^2+1)\},\\
 \gamma_{1,2}&=\frac{d_1}{2a(p^2-1)}\{ p^3a^2-4p^3a-4pa+p+d_1(-6p^5a+2p^4a-6p^4-2p^3a+2p^3\\
 &\hspace{6em}+2p^2a-2p^2-6pa+2p-6)+d_1^2(-8p^6+2p^5+8p^3+2p-8)\},\\
 \gamma_{2,2}&=-\frac{pd_1}{2a}\gamma_{1,1}^2.
\end{aligned}
 \label{eq:second-harmonic-correction}
\end{equation}

Meanwhile, the next two zero-harmonic coefficients are given by
\begin{align*}
 d_2&=\frac{d_1^2}{a}(2+3d_1),\quad d_3=\frac{d_1^3}{a^2}\left\{3a+4d_1(a+1+p+1/p) +5d_1^2(1+p+1/p)\right\},
\end{align*}
and the positive-harmonic boundary coefficients ($1\le m<M_j$) for $j=1,2$ are
\begin{align*}
 c_{1,1}&=\frac{k_1pd_1}{a(p-1)}\{a+1+d_1(p+3)\},\\
 c_{1,2}&=\frac{k_1pd_1^2}{a^2(p-1)^2(p+1)}\{ 5p^2a+pa^2-3pa+p\\
 &\qquad+d_1(p^4a+5p^3+12p^2a-5pa+8p-5)+d_1^2(p^5+11p^3+p^2+10p-7)\}.
\end{align*}
We omit the lengthy expression of $c_{2,2}$.

\section{Proof of Lemma~\ref{lem:uniform-one-sided}}
\label{app:one-sided-estimates}

\begin{proof}
Fix $\mu>0$, let $s=qt\in\Omega_N$ and $|\xi|\le2\mu$. For $s_j=q^js$, let $\xi_0=\xi$ with $\xi_j=\xi_{j-1}/(as_j^2)$, $j\ge1$, i.e., corresponding to the $q$-difference equation satisfied by $x(t)$. We introduce coordinate $w_j$ by $u_j=u(s_j;0)-\xi_jw_j$ and seek to show that $w_j\to1$ as $j\to\infty$, recalling the reference solution $u(s_j;0)$ satisfying \eqref{eq:exact-one-sided-background}.

Here, $w=(w_j)_{j\ge0}$ is constructed in $l^\infty(\mathbb N)$, the Banach space of bounded complex sequences indexed by $\mathbb N=\{0,1,2,\ldots\}$, with norm $\|w\|_\infty=\sup_{j\ge0}|w_j|$. We proceed to work in the closed ball $\|w-\mathbf1\|_\infty\le1$, where $\mathbf1=(1,1,\ldots)$; in particular, every sequence in this ball satisfies $|w_j|\le2$.

For $\xi\ne0$, subtracting \eqref{eq:normalised-u-equation} evaluated at the exact background $u(s_j;0)$ from the same equation for the trial values $u_j$, dividing by $-as_j^4\xi_j$, and rearranging shows that $u_j$ satisfies \eqref{eq:normalised-u-equation} for $j\ge1$ if and only if
\begin{equation}
 w_j-w_{j-1}=\mathcal J_j(w),\qquad j\ge1, \label{eq:normalised-tail-increment}
\end{equation}
where
\begin{align}
 \mathcal J_j(w):=& \left\{u_{j+1}u_j(1-u_j/s_j^2)-1\right\}w_{j-1}+\frac{1}{a^2s_j^2s_{j+1}^2}u(s_{j-1};0)u_j(1-u_j/s_j^2) w_{j+1}\notag\\
 &+\frac{1}{as_j^2}u(s_{j+1};0)u(s_{j-1};0) \left\{1-(u(s_j;0)+u_j)/s_j^2\right\}w_j. \label{eq:normalised-tail-polynomial}
\end{align}
The right-hand side is a polynomial in the three input components $w_{j-1},w_j,w_{j+1}$, and is holomorphic at $\xi=0$.

For sufficiently large $N$, $1/|as_j^2|\le1$, so that $|\xi_j|\le 2\mu/|as_j^2|$ for $j\ge1$. Together with \eqref{eq:exact-one-sided-background}, this gives $u_j=1+\mathcal O(1/s_j^{2})$ for $j\ge1$, uniformly over all trial sequences $w$ in the ball; therefore, \eqref{eq:normalised-tail-polynomial} shows that $\mathcal J_j(w)=\mathcal{O}(1/s_j^2)$.  Derivatives with respect to the input components have the same bounds. Consequently, for any two sequences $w,\widetilde w$ in the ball and every $j\ge1$,
\[
 |\mathcal J_j(w)|\le C/|s_j|^{2},\qquad |\mathcal J_j(w)-\mathcal J_j(\widetilde w)| \le (C/|s_j|^{2})\|w-\widetilde w\|_\infty,
\]
where $C$ may depend on $\mu$ and the fixed data, but not on $s,\xi$ or the shift index in the stated ranges.

For each fixed $s,\xi$, define a map $\mathcal S$ on this ball by
\begin{equation}
 (\mathcal S w)_n =1-\sum_{j=n+1}^{\infty}\mathcal J_j(w),\qquad n\ge0. \label{eq:normalised-tail-operator}
\end{equation}
This defines a new sequence for an arbitrary input $w$; the equation to solve is $w=\mathcal S w$. Indeed, summing \eqref{eq:normalised-tail-increment} with $w_j\to1$ gives precisely this fixed-point equation. Conversely, subtracting adjacent component equations recovers \eqref{eq:normalised-tail-increment}, while the tail bound gives $w_n\to1$.

Since $\sum_{j>n}|1/s_j|^{2}=|s_n|^{-2}/(|q|^2-1)$, the series in \eqref{eq:normalised-tail-operator} converge uniformly.  For all $w,\widetilde w$ in the ball,
\begin{align*}
 \|\mathcal S w-\mathbf1\|_\infty &\le\frac{C}{(|q|^2-1)|s|^{2}},\qquad\|\mathcal S w-\mathcal S\widetilde w\|_\infty\le\frac{C\|w-\widetilde w\|_\infty}{(|q|^2-1)|s|^{2}}.
\end{align*}
Increasing $N$ makes the displayed factor at most $1/2$, uniformly in $s,\xi$.  Thus $\mathcal S$ maps the ball into itself and is a contraction. The \emph{Picard iterates} are the successive approximations
\[
 w^{(0)}=\mathbf1,\qquad w^{(m+1)}=\mathcal S w^{(m)},\qquad m=0,1,2,\ldots.
\]
For each fixed $(s,\xi)$, the contraction mapping theorem guarantees that the Picard iterates $w^{(m)}$ converge in the supremum norm to the unique fixed point $w=w(s,\xi)$ in the ball. Its defining equation also yields $w_n=1+\mathcal {O} (1/s_n^{2})$.

Now define $\mathcal U(t,\xi)=u(s;0)-\xi w_0(s,\xi)$.  Each component of each Picard iterate is holomorphic in $s,\xi$, by locally uniform convergence of the defining sums.  The uniform contraction bound makes their limit locally uniform, so $\mathcal U$ is holomorphic on $\Omega_N/q\times\{|\xi|<2\mu\}$ and $\mathcal U(t,0)=u(qt;0)$. Uniqueness also makes the constructions for different $\mu$ agree wherever both are defined. Starting the construction at $qs,\xi_1$ yields precisely the fixed-point problem for the tail $(w_1,w_2,\ldots)$; uniqueness, therefore, implies shift compatibility. Reversing the subtraction that led to \eqref{eq:normalised-tail-increment} shows that $u_j=\mathcal U(s_j/q,\xi_j)$ solves \eqref{eq:normalised-u-equation} for $j\ge1$ when $\xi\ne0$, and hence also at $\xi=0$ by holomorphy. Substitution of $\xi=x(s/q)$ from \eqref{eq:normalised-recessive-amplitude} gives $u(s;k)=\mathcal U(s/q,x(s/q))$ and $f(s;k)=-u(s;k)/s$ on the domain in the lemma. That domain is forward invariant for large $N$ by \eqref{eq:normalised-recessive-amplitude}. If $s/q,s,qs$ all belong to it, the sequence construction with initial point $s/q$ proves the equation at $s$.

For the first correction, compare the fixed point with the first Picard iterate $w^{(1)}=\mathcal S\mathbf1$. The contraction estimate gives
\[
 \|w-w^{(1)}\|_\infty \le\frac{C\|w-\mathbf1\|_\infty}{(|q|^2-1)|s|^{2}}=\mathcal O(1/s^{4}).
\]
Using \eqref{eq:exact-one-sided-background}, $s_{j+1}=qs_j$, $\xi_j=\mathcal O(1/s_j^{2})$ and $\xi_{j+1}=\mathcal O(1/s_j^{4})$ in \eqref{eq:normalised-tail-polynomial} gives, at $w=\mathbf1$ and uniformly for $j\ge1$,
\[
 \mathcal J_j(\mathbf1) =\left(2/a+1/(ap)-1\right)/s_j^{2}-\xi_j +\mathcal O(1/s_j^{4}).
\]
The recurrence for $\xi_j$ and the geometric series give
\begin{align*}
 \sum_{j\ge1}1/s_j^{2}&=\frac{1}{(p-1)s^{2}},\quad \sum_{j\ge1}|1/s_j|^{4}=\mathcal O(1/s^{4}),\quad\sum_{j\ge1}\xi_j=\frac{\xi}{aps^{2}}+\mathcal O(1/s^{4}).
\end{align*}
Evaluating \eqref{eq:normalised-tail-operator} at $n=0$, $w=\mathbf1$, and using the bound for $w-w^{(1)}$ yields
\[
 w_0=1+\left\{\frac{ap-2p-1}{ap(p-1)} +\frac{\xi}{ap}\right\}\frac{1}{s^{2}} +\mathcal O(1/s^{4}),
\]
and hence \eqref{eq:uniform-amplitude-expansion}, since $\mathcal U(t,\xi)=u(qt;0)-\xi w_0(qt,\xi)$ and $\xi$ is bounded.

Finally, $f(s;k)-f(s;0)=x(s/q)w_0(s,x(s/q))/s$ gives \eqref{eq:theta-family-normalisation}.  On any fixed orbit based at $t_0\in U$ with $Z(t_0)\ne0$, the $r$-theta shift law \eqref{eq:global-phase-equation} makes $x(q^{n-1}t_0)$ decay faster than every algebraic power of $1/|q^nt_0|$.  This decay is uniform on compact subsets of the base that avoid the zeros of $Z$. To identify the orbitwise solution with Theorem~\ref{thm:one-sided-existence}, let $v_n=f(t_n;k)-D_n$ in the notation of Section~\ref{sec:quicksilver}.  Since $f(\cdot;0)-D$ and $f(\cdot;k)-f(\cdot;0)$ are algebraically flat, so is $v$. Lemma~\ref{lem:nonlinear-estimates} and Lemma~\ref{lem:summation-bound} then imply that $\mathcal K\mathcal Nv$ is algebraically flat, and by \eqref{eq:right-inverse}, $\mathcal A(v-\mathcal K\mathcal Nv)=0$. The dichotomy \eqref{eq:Pminus}--\eqref{eq:Pplus} therefore gives $v-\mathcal K\mathcal Nv$ as a constant multiple of $P^-$. Fix a radius $\rho>0$. As the starting index $N$ increases, $\|v\|_4+\|\mathcal K\mathcal Nv\|_4\to0$, by flatness and the uniform bound in Lemma~\ref{lem:summation-bound}.  Thus the homogeneous term $v-\mathcal K\mathcal Nv$ has norm less than $\rho/2$ on a sufficiently late tail. The estimates in the proof of Theorem~\ref{thm:one-sided-existence} then make the right-hand side of \eqref{eq:one-sided-fixed-point}, with this multiplier and at this same starting index, a contraction of the $X_4$ ball of radius $\rho$ into itself. The constructed $v$ lies in that ball and is its unique fixed point. The multiplier is chosen separately on each orbit and may depend on its base point; $k$ denotes the fixed $r$-theta amplitude in \eqref{eq:theta-family-normalisation}.
\end{proof}

\end{document}